\let\oldtocsection=\tocsection
\let\oldtocsubsection=\tocsubsection
\let\oldtocsubsubsection=\tocsubsubsection
\renewcommand{\tocsection}[2]{\hspace{0em}\oldtocsection{#1}{#2}}
\renewcommand{\tocsubsection}[2]{\hspace{3em}\oldtocsubsection{#1}{#2}}
\renewcommand{\tocsubsubsection}[2]{\hspace{6em}\oldtocsubsubsection{#1}{#2}}
\newtheorem{lemma}{Lemma}[section]
\newtheorem{proposition}[lemma]{Proposition}
\newtheorem{definition/proposition}[lemma]{Definition/Proposition}
\newtheorem{corollary}[lemma]{Corollary}
\newtheorem{theorem}[lemma]{Theorem}
\theoremstyle{definition}
\newtheorem{definition}[lemma]{Definition}
\newtheorem{remark}[lemma]{Remark}
\newtheorem{question}[lemma]{Question}
\newtheorem{theorema}{Theorem}
\newtheorem{corollarya}[theorema]{Corollary}
\newtheorem{observationa}[theorema]{Observation}
\newtheorem{propositiona}[theorema]{Proposition}
\begin{document}

\title{Contact domination}

\author{Sekh Kiran Ajij}
\address{School of Mathematics \\ Tata Institute of Fundamental Research \\ Mumbai 400005 \\ India}
\email{sekh@math.tifr.res.in}

\author{Ritwik Chakraborty}
\address{School of Mathematics \\ Tata Institute of Fundamental Research \\ Mumbai 400005 \\ India}
\email{ritwik@math.tifr.res.in}

\author{Balarka Sen}
\address{School of Mathematics \\ Tata Institute of Fundamental Research \\ Mumbai 400005 \\ India}
\email{balarka2000@gmail.com, balarka@math.tifr.res.in}




\begin{abstract}
In this note, we prove that every closed connected oriented odd-dimensional manifold admits a map of non-zero degree (i.e., a domination) from a tight contact manifold of the same dimension. This provides an odd-dimensional counterpart of a symplectic domination result due to Joel Fine and Dmitri Panov \cite{FP}. We prove that the dominating contact manifold can be ensured to be Liouville-fillable, but not Weinstein-fillable in general. We discuss an application for contact divisors arising as zero sets of asymptotically contact-holomorphic sections.
\end{abstract}

\maketitle

\tableofcontents

\section{Introduction}

\begin{definition}
Let $M, N$ be a pair of closed connected oriented manifolds  of the same dimension. $M$ \emph{dominates} $N$ if there exists a map $f : M \to N$ of strictly positive degree. 
\end{definition}

A systematic study of the behaviour of various classes of manifolds with respect to this ordering was first suggested by Gromov. We refer the reader to the survey \cite{dlH} for a detailed summary of the basic properties of this ordering, as well as several examples. In this article, we take the point of view that if a certain class $\mathcal{C}$ of manifolds dominates all closed connected oriented manifolds then, in some sense, the class $\mathcal{C}$ is ``large".

Recently, it was shown by Fine and Panov \cite{FP} that every closed connected oriented manifold of even dimension is dominated by a symplectic manifold. This result contrasts with the case of K\"{a}hler manifolds. For instance, it follows from Siu's rigidity theorem \cite{Siu} that compact real hyperbolic manifolds of dimension greater than $2$ are not dominated by K\"{a}hler manifolds. In fact, it was proved by Carlson and Toledo \cite{CaTo} that if $M$ is a K\"{a}hler manifold and $N$ is a locally symmetric space of noncompact type, then $M$ dominates $N$ only if $N$ is locally Hermitian symmetric. These suggest that the class of symplectic manifolds is ``larger" compared to the class of K\"ahler manifolds.

The main aim of this short note is to prove an odd-dimensional counterpart of the main theorem of \cite{FP}:

\begin{theorema}\label{thm-main}
Let $M^{2n+1}$ be a closed connected oriented odd-dimensional manifold. There exists a closed connected oriented manifold $Y^{2n+1}$ supporting a Liouville-fillable (in particular, tight) contact structure and a map $f : Y \to M$ of strictly positive degree.
\end{theorema}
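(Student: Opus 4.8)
The plan is to deduce the odd-dimensional statement from the even-dimensional theorem of Fine and Panov \cite{FP} by cutting down one dimension along a hypersurface. First I would replace $M$ by the closed oriented even-dimensional manifold $V := M \times S^1$ and apply \cite{FP} to obtain a closed symplectic manifold $(X^{2n+2}, \omega)$ together with a map $F : X \to V$ of degree $d \neq 0$. Writing $\pi := \mathrm{pr}_{S^1} \circ F : X \to S^1$ and choosing a regular value $\theta_0 \in S^1$, the level set $H := \pi^{-1}(\theta_0) = F^{-1}(M \times \{\theta_0\})$ is a closed oriented hypersurface in $X$, and a signed count of preimages shows at once that the restriction $F|_H : H \to M \times \{\theta_0\} \cong M$ again has degree $d$ (discarding components on which the local degree vanishes, I may take $H$ connected). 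Thus $H$ already solves the \emph{topological} half of the problem: it dominates $M$. Observe that $H$ is forced to be ``large'': if $M$ is hyperbolic then $\|H\| \ge d\,\|M\| > 0$. This rules out the most economical sources of fillable contact manifolds, such as Boothby--Wang prequantization circle bundles and other $S^1$-invariant contact manifolds, which carry a free circle action and hence have vanishing simplicial volume; the dominating manifold must genuinely be produced as a hypersurface.

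The hard part is the \emph{geometric} half: upgrading $H$ to a contact-type hypersurface carrying a Liouville-fillable contact structure. A generic hypersurface in a symplectic manifold is not of contact type, and here there is even a cohomological obstruction: if $H$ is of contact type then $\omega|_H = d(\iota_Z \omega|_H)$ is exact, forcing $i_H^{*}[\omega] = 0$ in $H^2(H;\mathbb{R})$, whereas the Fine--Panov class $[\omega]$ has no reason to restrict trivially to the fibre $H$. So the black-box output of \cite{FP} cannot be used as-is; one must open the construction and control the symplectic data near $H$. Concretely, I would aim to homotope $F$ so that $\pi$ is a genuine fibration near $\theta_0$ and to normalize $\omega$ on a collar $H \times (-\varepsilon, \varepsilon)$ to the symplectization form $d(e^{s}\alpha)$ for a contact form $\alpha$ on $H$ --- equivalently, to produce a Liouville vector field transverse to $H$ with contact restriction. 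This is where the real work lies, and it is the step I expect to be the main obstacle.

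Granting such a normalization, the conclusion follows cleanly. The collar exhibits $H$ as the convex boundary of the compact piece of $X$ on the side into which the Liouville field points inward; after checking that $\omega$ is exact on this side (or, failing that, replacing it by an abstract Liouville filling built from the same contact germ) one obtains a Liouville domain $W$ with $\partial W = H$. Setting $Y := H$ and $f := F|_H$ then yields a Liouville-fillable contact manifold of strictly positive degree over $M$. Finally, the parenthetical assertion is automatic, since overtwisted contact manifolds admit no Liouville filling; hence Liouville-fillability implies tightness, completing the proof.
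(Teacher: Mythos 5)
Your proposal correctly reduces to Fine--Panov on $M \times S^1$ and correctly handles the topological half: the hypersurface $H = F^{-1}(M \times \{\theta_0\})$ dominates $M$. But the entire content of the theorem lies in the step you explicitly defer (``granting such a normalization''), and the route you sketch for it cannot work as stated. As you yourself note, if $H$ is of contact type then $\omega|_H$ is exact; this is not a transversality issue that collar normalization or a homotopy of $F$ can repair, because it is obstructed by a homotopy invariant. Indeed, for any closed $(2n-1)$-form $\beta$ on $X$ one has $\int_H \omega \wedge \beta = \int_X \omega \wedge \beta \wedge \pi^*(d\theta)$, i.e.\ the pairing of $[\omega] \smile \pi^*[d\theta]$ with $H^{2n-1}(X;\mathbf{R})$; these numbers depend only on $[\omega]$ and the homotopy class of $F$, and they must all vanish if $\omega|_{H}$ is exact. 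Fine--Panov gives no control whatsoever over the class $[\omega] \smile \pi^*[d\theta]$, so no perturbation of $F$ makes its fiber contact-type. The fallback you offer --- ``replacing it by an abstract Liouville filling built from the same contact germ'' --- is circular: producing a Liouville filling of a given contact manifold is precisely the hard problem (and most contact manifolds admit none). So what you have proved is domination by a smooth manifold, not by a Liouville-fillable contact manifold.

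The paper's proof is structured exactly to evade this obstruction: it kills the cohomology of $\omega$ \emph{before} looking for contact boundaries, and it produces the dominating contact manifold in codimension $2$ rather than as a hypersurface. Concretely: make $\omega$ integral, remove a neighborhood of a Donaldson divisor $\Sigma \subset X$ so that $W := X \setminus \nu(\Sigma)$ is an exact --- in fact Weinstein --- domain (Theorem \ref{thm-girdon}); then $P := W \times (S^1 \times [-1,1])$ is a Liouville domain with boundary $Y = D(W) \times S^1$, and one constructs $f : Y \to M$ (by collapsing $Y$ onto $\partial(W \times D^2)$ and then mapping to $X$) whose regular fibers are tori $\gamma_i \times S^1$ representing a non-torsion class in $H_2(P)$. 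Finally, Presas' relative Donaldson theorem (Theorem \ref{thm-presas}) yields a symplectic divisor of $P$, transverse to $Y$ and Poincar\'e dual to the class dual to the fiber, whose intersection with $Y$ is a contact submanifold, Liouville-filled by the divisor itself, and of positive degree over $M$ because it meets the fibers of $f$ positively. That approximately holomorphic, codimension-$2$ technology (Donaldson--Giroux--Presas) is the missing idea your sketch needs; without it, or some substitute mechanism for producing exactness, the proposal has a genuine gap at its central step.
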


We remark that in dimension $3$, domination by tight contact manifolds follows from known results. Indeed, Sakuma \cite{Sakuma} showed every closed oriented $3$-manifold admits a branched cover which fibers over the circle (see also, \cite{Montesinos}). Using the work of Eliashberg and Thurston \cite{ET}, it follows that fibered $3$-manifolds admit tight contact structures. 

It is not difficult to show that any odd-dimensional closed connected oriented manifold is dominated by \emph{some} contact manifold, given the existence $h$-principle for contact structures in all dimensions proved by Borman, Eliashberg and Murphy \cite{BEM}. However, the contact structures constructed by \cite{BEM} are apriori overtwisted. Thus, the main difficulty is ensuring that the dominating contact manifold is tight. In fact, we show that we can arrange it to be Liouville-fillable. This shows Liouville-fillable contact manifolds abound. We observe that the same is not true for the class of Weinstein-fillable contact manifolds:

\begin{observationa}[Corollary \ref{cor-steinnotdom}]\label{obs-b}
    Let $M^{2n+1}$ be a rationally essential (for instance, negatively curved) manifold. Then $M$ is not dominated by any Weinstein-fillable contact manifold.
\end{observationa}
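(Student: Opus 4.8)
The plan is to argue by contradiction, exploiting the fact that a Weinstein filling has the homotopy type of a low-dimensional complex, which rigidly constrains its fundamental group relative to the boundary. Recall that $M^{2n+1}$ is \emph{rationally essential} if the classifying map $c_M : M \to B\pi_1(M)$ carries the fundamental class to a nonzero element $(c_M)_*[M] \neq 0$ in $H_{2n+1}(B\pi_1(M); \mathbb{Q})$; any closed aspherical (in particular, negatively curved) manifold qualifies, since there $c_M$ is a homotopy equivalence. Suppose then, for contradiction, that $f : Y \to M$ has strictly positive degree, where $(Y,\xi)$ is Weinstein-fillable, say $Y = \partial W$ for a Weinstein domain $W^{2n+2}$. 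Writing $\Gamma = \pi_1(M)$, the composite $u = c_M \circ f : Y \to B\Gamma$ satisfies
\[
u_*[Y] \;=\; \deg(f)\,(c_M)_*[M] \;\neq\; 0 \quad\text{in } H_{2n+1}(B\Gamma;\mathbb{Q}),
\]
since $\deg f > 0$ and $M$ is rationally essential.

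The heart of the matter is to show that $u$ extends over the filling $W$. This is exactly where the Weinstein hypothesis enters: $W$ deformation retracts onto its isotropic skeleton, a CW complex of dimension at most $n+1$. Dually, $W$ is obtained from a collar $\partial W \times [0,1]$ by attaching handles of index at least $(2n+2)-(n+1) = n+1$. When $2n+1 \geq 5$, i.e. $n \geq 2$, these indices are all $\geq 3$, so attaching them alters neither generators nor relations of the fundamental group; consequently the inclusion $\iota : Y \hookrightarrow W$ induces an isomorphism $\iota_* : \pi_1(Y) \xrightarrow{\ \cong\ } \pi_1(W)$. Since $B\Gamma$ is a $K(\Gamma,1)$, the homomorphism $u_* : \pi_1(Y) \to \Gamma$ factors, up to conjugacy, as $u_* = \bar u_* \circ \iota_*$ for some $\bar u_* : \pi_1(W) \to \Gamma$, and this factorization is realized by a map $g : W \to B\Gamma$ with $g \circ \iota \simeq u$ (maps into an aspherical target being determined up to homotopy by their effect on $\pi_1$).

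Now I obtain the contradiction. On the one hand $u_*[Y] \neq 0$; on the other hand $[Y]$ is the fundamental class of the boundary of the compact oriented manifold $W$, hence null-homologous there, $\iota_*[Y] = 0$ in $H_{2n+1}(W;\mathbb{Q})$ (alternatively $H_{2n+1}(W;\mathbb{Q}) = 0$ outright, as $W$ has homotopy dimension $n+1 < 2n+1$). Therefore
\[
u_*[Y] \;=\; (g\circ\iota)_*[Y] \;=\; g_*\bigl(\iota_*[Y]\bigr) \;=\; 0,
\]
contradicting the previous display. Hence no map $f$ of positive degree exists, which is the assertion.

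The step I expect to be the genuine obstacle — and the only point at which the symplectic geometry is used, everything else being formal — is the fundamental-group isomorphism $\pi_1(Y) \cong \pi_1(W)$. This is also where the dimension enters decisively: the handles built onto $\partial W$ must have index $\geq 3$, forcing $2n+1 \geq 5$. In dimension $3$ one obtains only surjectivity of $\iota_*$, and the statement in fact fails there: $T^3 = \partial(DT^*T^2)$ is Weinstein-fillable, aspherical (hence rationally essential), and dominates itself, so some dimensional restriction is genuinely necessary. I would accordingly state and prove the Observation for $2n+1 \geq 5$, the three-dimensional case being outside its scope (cf. the tightness discussion following Theorem \ref{thm-main}).
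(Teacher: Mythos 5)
Your proof is correct and is essentially the paper's own argument: the paper splits it into Proposition \ref{prop-steininess} (a Weinstein filling is obtained from its boundary by attaching handles of index at least $3$, so $\pi_1(Y) \to \pi_1(W)$ is an isomorphism, the classifying map extends over $W$, and $[Y]$ bounds there) together with the cited fact that domination preserves rational essentialness, whereas you inline that second step by composing with $c_M$ and extending $c_M \circ f$ over the filling directly; the geometric input is identical.

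One substantive point in your proposal deserves to be highlighted rather than glossed over: your dimensional caveat is not pedantry but an actual correction. The paper's Proposition \ref{prop-steininess} carries the hypothesis that the contact manifold has dimension at least $5$, yet Corollary \ref{cor-steinnotdom} and Observation \ref{obs-b} are stated with no dimensional restriction, so as printed they include the three-dimensional case, where the claim fails: $T^3 = \partial(DT^*T^2)$ is Weinstein-fillable, aspherical (hence rationally essential), and dominates itself. Your restriction to $2n+1 \geq 5$ is therefore genuinely necessary, and your proof, like the paper's, establishes exactly the correctly restricted statement.
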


We call a manifold $M$ rationally $k$-connected if $\pi_1(X)$ is finite, and $\pi_i(X) \otimes \mathbf{Q} = 0$ for all $2 \leq i \leq k$. As a positive result regarding Weinstein-fillable domination, we show: 

\begin{propositiona}[Proposition \ref{prop-ratconnweindom}]\label{prop-c}
    A closed connected oriented rationally $(n-1)$-connected manifold $M^{2n+1}$ is dominated by a Weinstein-fillable contact manifold.
\end{propositiona}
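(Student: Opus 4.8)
The plan is to reduce the analytic content to a purely smooth-topological problem via the existence $h$-principle for flexible Weinstein structures, and then to solve that problem by handle theory, with the rational connectivity hypothesis controlling the middle-dimensional topology. First I would reduce to the simply connected case. Since $\pi_1(M)$ is finite, the universal covering $\widetilde{M} \to M$ is a finite cover of closed oriented $(2n+1)$-manifolds, hence of strictly positive degree $|\pi_1(M)|$, and $\widetilde{M}$ is simply connected and still rationally $(n-1)$-connected. As positive degree is multiplicative under composition, it suffices to dominate $\widetilde{M}$; so I assume henceforth that $M$ is simply connected.

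The key reduction is that I need not build a Weinstein structure by hand. By the flexible Weinstein existence theorem of Cieliebak--Eliashberg (resting on Murphy's loose Legendrian $h$-principle), any compact $(2n+2)$-manifold $W$ carrying an almost complex structure together with a Morse function whose critical points all have index $\le n+1$ admits a genuine Weinstein structure, so that $\partial W$ inherits a Weinstein-fillable contact structure. It therefore suffices to construct a handlebody $W^{2n+2}$, built from $D^{2n+2}$ by attaching handles of index $n$ and $n+1$ only, together with a map $\partial W \to M$ of strictly positive degree; being homotopy equivalent to an $(n+1)$-complex with stably trivial tangent bundle, such a $W$ automatically admits an almost complex structure.

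To build $W$ I would exploit the hypotheses on $M$. By Poincar\'e duality the rational cohomology of $M$ is concentrated in degrees $0, n, n+1, 2n+1$, with a perfect pairing $H^n \otimes H^{n+1} \to H^{2n+1}$, and by the rational Hurewicz theorem the Hurewicz map $\pi_i(M)\otimes\mathbf{Q}\to H_i(M;\mathbf{Q})$ is an isomorphism for $i \le 2n-2$, so (for $n \ge 3$) rational bases of $H_n(M;\mathbf{Q})$ and $H_{n+1}(M;\mathbf{Q})$ are represented by maps of spheres into $M$. I would then attach $n$- and $(n+1)$-handles to $D^{2n+2}$ so that the resulting $(n-1)$-connected boundary $N = \partial W$ carries middle-dimensional spheres mapping to these generators, with the intersection data of the handles arranged to be dual to the Poincar\'e pairing of $M$; the map defined on the spheres then extends to a map $N \to M$ whose degree is the determinant of that pairing, hence positive. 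Fillability of $N$ is then immediate from the previous paragraph.

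The hard part will be exactly this last construction: realizing the rational middle-dimensional data of $M$ by an honest $(n-1)$-connected manifold with a positive-degree map, which amounts to controlling the attaching map of the top cell, i.e.\ the Whitehead products and higher obstructions living in $\pi_{2n}(M)$. Rational $(n-1)$-connectivity is precisely what forces these to vanish rationally: for $n \ge 3$ all potential Massey-type contributions land above the top degree $2n+1$, so after clearing denominators the desired map exists and has positive degree. The low-dimensional cases must be treated separately, since both the Hurewicz range and the handlebody construction degenerate. For $n=1$ the hypothesis gives $\pi_1(M)$ finite, whence $\widetilde{M} = S^3$ by the elliptization theorem and $S^3 = \partial B^4$ dominates $M$; for $n=2$ I would instead invoke the Smale--Barden classification of simply connected $5$-manifolds to exhibit a Weinstein-fillable dominator directly.
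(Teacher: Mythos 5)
You correctly isolate the crux of the problem -- extending the map from the middle-dimensional spheres over the top cell -- and it is exactly there that your argument has a genuine gap. The obstruction to this extension is the sum of Whitehead products $[\varphi_1,\psi_1]+\cdots+[\varphi_k,\psi_k]\in\pi_{2n}(M)$, where $\varphi_i\colon S^n\to M$ and $\psi_i\colon S^{n+1}\to M$ represent the dual rational bases. Your justification for its rational vanishing -- that ``all potential Massey-type contributions land above the top degree $2n+1$'' -- does not apply to this element: it lives in degree $2n$, below the dimension of $M$, and the group $\pi_{2n}(M)\otimes\mathbf{Q}$ is in general \emph{nonzero} for manifolds satisfying your hypotheses (already for $M=\#^k(S^n\times S^{n+1})$ with $k\ge 2$ it contains the nontrivial cross-brackets $[\varphi_i,\psi_j]$, $i\ne j$). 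So no degree count kills the obstruction; one must prove that this particular element is rationally trivial. Your phrase gestures at the reason $M$ is \emph{formal} (Miller's theorem: a simply connected, rationally $(n-1)$-connected closed manifold of dimension $2n+1\le 4n-2$ is formal), but formality alone is not the conclusion. The paper's argument runs: $M$ and $\#^k(S^n\times S^{n+1})$ have isomorphic rational cohomology algebras; both are formal, hence have isomorphic Sullivan minimal models, hence isomorphic rational homotopy Lie algebras; and in $\pi_{2n}(\#^k(S^n\times S^{n+1}))$ the corresponding sum of Whitehead products vanishes because the top cell is attached along it. This identifies the obstruction in $\pi_{2n}(M)$ as torsion, after which one precomposes the $\varphi_i,\psi_i$ with degree-$d$ self-maps of the spheres to annihilate it while keeping the resulting map of positive degree. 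Without this comparison (or an equivalent argument), your construction of the dominating manifold does not get off the ground.

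The rest of your outline parallels the paper's proof but is heavier than necessary. You do not need $(n+1)$-handles, nor the Cieliebak--Eliashberg flexible-Weinstein existence theorem, nor the attendant verification that your handlebody admits an almost complex structure (which you assert rather than check: stable triviality of $TW$ depends on the framings chosen for the handles). The paper dominates $M$ by $N=\#^k(S^n\times S^{n+1})$, which bounds $\#_\partial^k(S^n\times D^{n+2})$, obtained from $D^{2n+2}$ by attaching only \emph{subcritical} $n$-handles along isotropic $n$-spheres in disjoint Darboux charts of $(S^{2n+1},\xi_{\mathrm{std}})$; this is explicitly a Weinstein domain, and the belt spheres of these handles already realize the degree-$(n+1)$ homology of the boundary, so no critical handles and no $h$-principle are needed. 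Finally, your separate treatment of $n=2$ via Smale--Barden is unnecessary: for $n=2$ one still has rational surjectivity of the Hurewicz maps in degrees $2$ and $3$ and Miller formality, so the main argument applies; only $n=1$ is genuinely degenerate, and there your reduction to $\widetilde{M}=S^3=\partial B^4$ is fine.
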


Additionally, as an application of Observation \ref{obs-b} we prove that the asymptotically contact-holomorphic divisors constructed in the work of Ibort, Mart\'{i}nez-Torres and Presas \cite{IMP} are not necessarily Weinstein-fillable:

\begin{corollarya}[Proposition \ref{prop-dondivnotfill}] \label{cor-d} There exists contact manifolds $(Y^{2n+1}, \xi)$ and non-trivial complex line bundles $L$ over $Y$ such that the asymptotically contact-holomorphic divisors of $Y$ corresponding to $L$ are not Weinstein-fillable. \end{corollarya}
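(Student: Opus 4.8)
The plan is to combine Theorem~\ref{thm-main} with Observation~\ref{obs-b}, reducing everything to a single homological computation on a torus. First I record the relevant consequence of Observation~\ref{obs-b}: applying Corollary~\ref{cor-steinnotdom} to the degree-one identity map $M \to M$ shows that a rationally essential manifold carries \emph{no} Weinstein-fillable contact structure at all (a Weinstein filling of it would exhibit it as dominating itself). Hence it suffices to produce a contact manifold $(Y^{2n+1},\xi)$ and a nontrivial line bundle $L$ for which the associated asymptotically contact-holomorphic divisor $W^{2n-1} \subset Y$ is rationally essential as a smooth manifold; the contact structure it inherits as a divisor is then not Weinstein-fillable. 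Throughout I assume $n \geq 2$, so that the divisor is connected.

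To build $Y$, I would apply Theorem~\ref{thm-main} to the torus $M = T^{2n+1} = B\mathbf{Z}^{2n+1}$, obtaining a Liouville-fillable (in particular tight) contact manifold $(Y^{2n+1},\xi)$ together with a map $f : Y \to T^{2n+1}$ of strictly positive degree $d$. Since $f$ has nonzero degree, $f^*$ is injective on rational cohomology (as $\langle f^* x, [Y]\rangle = d\,\langle x, [T^{2n+1}]\rangle$ together with Poincar\'e duality forces $f^*x \neq 0$ whenever $x \neq 0$). Therefore, for the decomposable class $\beta = e_1 \cup e_2 \in H^2(T^{2n+1};\mathbf{Z})$ (a product of two standard generators), the pullback $f^*\beta$ is nonzero rationally; let $L \to Y$ be a complex line bundle with $c_1(L) = f^*\beta$, which is thus nontrivial. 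I then feed $(Y,\xi)$ and $L$ into the construction of Ibort--Mart\'inez-Torres--Presas \cite{IMP} to obtain, for $k \gg 0$, a contact divisor $W \subset Y$ as the zero set of an asymptotically contact-holomorphic section twisted by $L$.

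The crux is the rational homology class $\iota_*[W] \in H_{2n-1}(Y;\mathbf{Q})$, where $\iota : W \hookrightarrow Y$. The divisor is Poincar\'e dual to $k\,c_1(\text{compatible bundle}) + c_1(L)$; because the contact-compatible curvature form $d\alpha$ is exact, the class $[d\alpha/2\pi]$ vanishes rationally, so the high-power term contributes nothing rationally and $\iota_*[W]$ is a nonzero rational multiple of $f^*\beta \frown [Y]$. Pushing forward by $f$ and using the projection formula,
\[
(f\circ\iota)_*[W] \;=\; f_*\big(f^*\beta \frown [Y]\big) \;=\; \beta \frown f_*[Y] \;=\; d\,\big(\beta \frown [T^{2n+1}]\big),
\]
up to the same positive multiple. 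Now $\beta \frown [T^{2n+1}]$ is Poincar\'e dual to $e_1 \cup e_2$, represented by a coordinate subtorus $T^{2n-1} \subset T^{2n+1}$, hence nonzero in $H_{2n-1}(T^{2n+1};\mathbf{Q})$. Thus $(f\circ\iota)_*[W] \neq 0$. Since $T^{2n+1}$ is aspherical, the map $f\circ\iota : W \to T^{2n+1}$ factors up to homotopy through the classifying map $W \to B\pi_1(W)$, forcing the image of $[W]$ in $H_{2n-1}(B\pi_1(W);\mathbf{Q})$ to be nonzero. Hence $W$ is rationally essential, and by the first paragraph its inherited contact structure admits no Weinstein filling, proving the corollary.

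I expect the main obstacle to lie in the third paragraph: correctly pinning down the rational Poincar\'e dual of the \cite{IMP} divisor, i.e.\ verifying that the compatible (prequantum) curvature contributes trivially to rational cohomology while the twist $L$ enters so that $\iota_*[W]$ is a \emph{nonzero} rational multiple of $f^*\beta \frown [Y]$. This is essential, since if $W$ were rationally null-homologous in $Y$ no map factoring through $\iota$ could detect its rational essentialness, and the argument would collapse. A secondary, more routine point is to check that the Liouville-fillable $(Y,\xi)$ furnished by Theorem~\ref{thm-main} meets the integrality and compatibility hypotheses required to run the construction of \cite{IMP} with the chosen $L$; this should be unproblematic, as $L$ may be prescribed with arbitrary $c_1$ and only large $k$ is needed.
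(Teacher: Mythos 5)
Your proposal is correct in substance, but it takes a genuinely different route from the paper. The paper never invokes Theorem \ref{thm-main}: instead it starts from an arbitrary rationally essential $X^{2n+1}$, uses Thom's observation (Proposition \ref{prop-stabparadom}) to replace it by a stably parallelizable rationally essential manifold $M$ dominating $X$, puts an \emph{overtwisted} contact structure on $M \times T^{2k}$ via the Borman--Eliashberg--Murphy $h$-principle \cite{BEM}, and then runs an induction: at each stage it pulls back a degree-$1$ line bundle from a $T^2$ factor and takes an IMP divisor, producing a descending chain $Z_1 \supset \cdots \supset Z_k$ of divisors with degree-$1$ maps $Z_j \to M \times T^{2(k-j)}$; the last divisor $Z_k$ dominates $M$ with degree $1$, hence is rationally essential and not Weinstein-fillable. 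Your argument instead applies Theorem \ref{thm-main} to the aspherical target $T^{2n+1}$, pulls back a degree-two cohomology class to define $L$, takes a \emph{single} IMP divisor $W$ with $\iota_*[W] = f^*\beta \frown [Y]$, and detects rational essentialness of $W$ by pushing its fundamental class into the torus (projection formula, plus the fact that any map to an aspherical space factors up to homotopy through the classifying map of the domain). This is shorter --- one application of Theorem \ref{thm-imp}, no induction, no \cite{BEM}, no \cite{Thom} --- and has the bonus that your ambient $(Y,\xi)$ is tight (Liouville-fillable), whereas the paper's ambient contact manifolds are a priori overtwisted; on the other hand, the paper's construction is logically independent of Theorem \ref{thm-main} and yields the stronger conclusion that one can arrange the non-fillable divisor to admit a degree-$1$ map onto \emph{any} prescribed rationally essential manifold.

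Two small corrections. First, your dimension threshold should be $n \geq 3$, not $n \geq 2$: the divisor $W$ has dimension $2n-1$, and Proposition \ref{prop-steininess} (hence Corollary \ref{cor-steinnotdom}) only obstructs essentialness of Weinstein-fillable contact manifolds of dimension at least $5$; for $n = 2$ the divisor is a $3$-manifold and the claim genuinely fails there (e.g.\ $T^3$ is rationally essential yet Weinstein-fillable as the unit cotangent bundle of $T^2$). Since the statement is an existence statement, this costs nothing. Second, the worry you flag about the ``high-power prequantum term'' is vacuous in the contact setting: in the construction behind Theorem \ref{thm-imp}, the auxiliary bundle $L_0$ is the \emph{trivial} bundle equipped with the connection $-ik\alpha$, so $c_1(L_0) = 0$ integrally, the twisted bundle $E_k = L \otimes L_0^{\otimes k}$ has $c_1(E_k) = c_1(L)$, and the divisor is Poincar\'e dual to $c_1(L)$ on the nose --- this is exactly what Theorem \ref{thm-imp} asserts, so no rational approximation argument is needed there.
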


This contrasts with the case when $L \cong \underline{\mathbf{C}}$ is the trivial line bundle, i.e.~ when the divisor is homologically trivial. In this case, an observation due to Giroux and Mohsen \cite{Giroux} (see Proposition \ref{prop-girouxobd}) demonstrates that such a divisor is always Weinstein-fillable.

\subsection*{Acknowledgements} The authors thank Mahan Mj and Dishant Pancholi for their interest and encouragement. The third author thanks Aditya Kumar and Mike Miller Eismeier for various discussions and comments during the preparation of this article. This work is supported by the Department of Atomic Energy, Government of India, under project no.12-R\&D-TFR-5.01-0500.

\section{Preliminary definitions and properties}\label{sec-prel}

\subsection{Liouville and Weinstein domains} 

\begin{definition}\cite{Yasha}
A \emph{Liouville domain} is a tuple $(W, \omega, v)$ where 
$W$ is a compact manifold with boundary, $\omega$ is an {exact} symplectic form on $W$, and $v$ is a \emph{Liouville vector field} for $(W, \omega)$, i.e., $v$ is a vector field on $W$ pointing transversely out of the boundary $\partial W$, such that $\mathcal{L}_v \omega = \omega$. 

A \emph{Weinstein domain} is a tuple $(W, \omega, v, \phi)$ such that $(W, \omega, v)$ is a Liouville domain, and $\phi : W \to \mathbf{R}$ is a Morse function which is \emph{gradient-like} with respect to $v$, i.e., for some some $\delta > 0$ and some choice of an ambient metric on $W$, one has $df(v) \geq \delta (\|v\|^2 + \|df\|^2)$.
\end{definition}

We record the following simple lemma for future use:

\begin{lemma}\label{lem-prod}
Let $(W_1, \omega_1, v_1)$ and $(W_2, \omega_2, v_2)$ be a pair of Liouville domains. The product manifold $W_1 \times W_2$ admits the structure of a Liouville domain.
\end{lemma}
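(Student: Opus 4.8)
The plan is to put the obvious split Liouville data on the product and then deal with the corner that any product of two manifolds-with-boundary must have. Writing $\pi_i \colon W_1 \times W_2 \to W_i$ for the projections and $\lambda_i := \iota_{v_i}\omega_i$ for the Liouville form of each factor (so that $d\lambda_i = \mathcal{L}_{v_i}\omega_i = \omega_i$), I would set $\omega := \pi_1^*\omega_1 + \pi_2^*\omega_2$ and $\lambda := \pi_1^*\lambda_1 + \pi_2^*\lambda_2$. Then $\omega = d\lambda$ is an exact symplectic form on $W_1 \times W_2$, and its $\omega$-dual vector field $v$ (characterized by $\iota_v\omega = \lambda$) is exactly $v_1 \oplus v_2$ under the splitting $T(W_1\times W_2) = \pi_1^*TW_1 \oplus \pi_2^*TW_2$, since the cross terms of $\omega$ vanish. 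A one-line application of Cartan's formula gives $\mathcal{L}_v\omega = d\iota_v\omega + \iota_v d\omega = d\lambda = \omega$, so $(\omega, v)$ already satisfies every condition in the definition of a Liouville domain except, possibly, outward-transversality of $v$ along the boundary.

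The one genuine issue, and the main obstacle, is that $W_1 \times W_2$ is not a smooth manifold with boundary but a manifold with corners: $\partial(W_1\times W_2) = (\partial W_1 \times W_2)\cup(W_1 \times \partial W_2)$, and these two faces meet along the corner locus $\partial W_1 \times \partial W_2$. Away from the corner the transversality is automatic; for instance, at a point of $\partial W_1 \times \mathrm{int}(W_2)$ the outward normal direction is tangent to the first factor, where $v$ restricts to $v_1$, which points strictly outward by hypothesis (and the $v_2$ component is tangent to the face). So the entire content of the lemma is to round the corner while keeping $v$ outward-pointing.

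To set up coordinates near the corner I would flow each boundary $\partial W_i$ inward along $-v_i$: because $\mathcal{L}_{v_i}\omega_i = \omega_i$, this produces a collar $\partial W_i \times (1-\epsilon, 1]_{r_i}$ in which $v_i = r_i\,\partial_{r_i}$ and $\partial W_i = \{r_i = 1\}$. On the overlap near $\partial W_1 \times \partial W_2$ one then has coordinates $(y_1, r_1, y_2, r_2)$ with $v = r_1\,\partial_{r_1} + r_2\,\partial_{r_2}$, and $W_1\times W_2$ is locally $\{r_1 \le 1,\ r_2 \le 1\}$ with corner locus $\{\max(r_1,r_2) = 1\}$. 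Now choose a smooth $H(r_1,r_2)$ that equals $\max(r_1,r_2)$ outside a small neighborhood of the diagonal $\{r_1 = r_2\}$ and satisfies $\partial_{r_1}H \ge 0$, $\partial_{r_2}H \ge 0$ with the two partials never simultaneously zero, i.e.\ a standard smoothing of the maximum function (for example a soft-max, interpolated back to $\max$ away from the diagonal). Since $r_1, r_2 > 0$ on the collar, transversality reduces to the single estimate $dH(v) = r_1\,\partial_{r_1}H + r_2\,\partial_{r_2}H > 0$. Hence $\{H = 1\}$ is everywhere transverse to $v$, it agrees with $\partial(W_1\times W_2)$ outside the corner, and the compact region $W := \{H \le 1\}$ is a corner-smoothing of $W_1 \times W_2$, so in particular diffeomorphic to it. Then $(W, \omega|_W, v|_W)$ is the desired Liouville domain, completing the argument.
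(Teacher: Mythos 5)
Your proof is correct and follows essentially the same route as the paper: equip $W_1 \times W_2$ with the split Liouville data $\omega = \omega_1 \oplus \omega_2$, $v = v_1 \oplus v_2$, verify $\mathcal{L}_v\omega = \omega$, and then round the corner $\partial W_1 \times \partial W_2$ while keeping $v$ outward-transverse. The only divergence is in the corner-rounding step, where the paper simply deletes an $\varepsilon$-neighborhood of the union of boundary faces and corners and asserts the resulting boundary is transverse to $v$, whereas your smoothed-max hypersurface $\{H = 1\}$ in the Liouville collar coordinates $v = r_1\,\partial_{r_1} + r_2\,\partial_{r_2}$ makes that transversality explicit --- a more careful implementation of the same idea.
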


\begin{proof}
Note $W_1 \times W_2$ is a manifold with boundaries and corners. Nevertheless, $\omega := \omega_1 \oplus \omega_2$ defined an exact symplectic form in the interior of $W_1 \times W_2$. Consider the vector field $v := v_1 \oplus v_2$ defined in the interior of $W_1 \times W_2$. Then,
$$\mathcal{L}_{v} \omega = \mathcal{L}_{v_1} \omega_1 \oplus \mathcal{L}_{v_2} \omega_2 = \omega_1 \oplus \omega_2 = \omega.$$
We smooth out the corners of $W_1 \times W_2$ by deleting an open $\varepsilon$-neighborhood of the union of boundaries and corners of $W_1 \times W_2$, for some small $\varepsilon > 0$. The resulting manifold with boundary is homeomorphic to $W_1 \times W_2$. Moreover, the vector field $v = v_1 \oplus v_2$ points transversely out of the resulting boundary. This proves the claim.
\end{proof}

\begin{definition}
    A contact manifold $(Y, \xi)$ is \emph{Liouville-fillable} (resp. \emph{Weinstein-fillable}) if there exists a Liouville (resp. Weinstein) domain $(W, \omega, v)$ (resp. $(W, \omega, v, \phi)$) such that $Y = \partial W$ and $\xi = \ker(i_v \omega|_{\partial W})$.
\end{definition}

The following theorem is a consequence of \cite{Niederkruger} and the discussion in \cite[pg.~4]{BEM}:

\begin{theorem}\label{thm-filltight} Liouville-fillable contact manifolds are tight.
\end{theorem}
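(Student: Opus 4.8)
The plan is to argue by contradiction, reducing the statement to the known incompatibility between overtwistedness and symplectic fillability. Recall that a contact structure is \emph{tight} precisely when it is not overtwisted in the sense of Borman--Eliashberg--Murphy \cite{BEM}; so it suffices to show that an overtwisted contact manifold cannot arise as the boundary of a Liouville domain. First I would record the structural input from \cite{BEM}: the model overtwisted disk contains a \emph{plastikstufe} (equivalently, a bordered Legendrian open book), the higher-dimensional analogue of the $3$-dimensional overtwisted disk introduced by Niederkr\"{u}ger. This is exactly the content of the discussion on pg.~4 of \cite{BEM}, so once $(Y,\xi)$ is overtwisted the presence of a plastikstufe in $Y$ is automatic.

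Next I would invoke Niederkr\"{u}ger's non-fillability theorem \cite{Niederkruger}: a closed contact manifold carrying a plastikstufe admits no \emph{semi-positive} symplectic filling. The mechanism is a Bishop-type family of pseudoholomorphic disks with boundary on the plastikstufe: one studies the moduli space of such disks inside a hypothetical filling and shows, via Gromov compactness together with the foliated structure of the plastikstufe, that this moduli space would have to be a compact one-dimensional manifold with exactly one boundary point, which is impossible. This holomorphic-curve argument is the technical heart of the assertion, and it is the step I would expect to be the main obstacle were one to reprove the theorem from scratch; in the present note it is taken as a black box via the citation.

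Finally I would verify that a Liouville domain satisfies the hypothesis of Niederkr\"{u}ger's theorem. Since the symplectic form $\omega$ on a Liouville domain $(W,\omega,v)$ is exact, the class $[\omega]$ vanishes on $\pi_2(W)$; in particular there are no $\omega$-positive spheres, so the semi-positivity condition is satisfied vacuously (indeed $(W,\omega)$ is symplectically aspherical). Hence if $\xi = \ker\bigl(i_v\omega|_{\partial W}\bigr)$ were overtwisted, the plastikstufe it contains would obstruct the very filling $W$ we started with, a contradiction. Therefore $\xi$ is tight, which is the desired conclusion. The only genuinely nontrivial point to spell out in the write-up is the passage from the \cite{BEM}-definition of overtwistedness to the existence of a plastikstufe; everything else is either a direct citation of \cite{Niederkruger} or the elementary observation that exactness forces semi-positivity.
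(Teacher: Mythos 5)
Your proposal is correct and takes essentially the same route as the paper: the paper gives no argument beyond asserting that the theorem ``is a consequence of \cite{Niederkruger} and the discussion in \cite[pg.~4]{BEM}'', and your chain --- overtwisted implies the presence of a plastikstufe (the BEM discussion), a plastikstufe obstructs semi-positive fillings (Niederkr\"{u}ger), and exactness of the Liouville form makes the filling symplectically aspherical and hence vacuously semi-positive --- is precisely the intended unpacking of those two citations. The only caveat is historical rather than mathematical: the implication ``overtwisted implies PS-overtwisted'' in full generality is due to later work of Casals--Murphy--Presas, but since the paper itself rests the theorem on the BEM discussion, your write-up matches the paper's own level of justification.
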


\subsection{Symplectic and contact divisors} In this section, we shall summarize some foundational results pertaining to the existence of symplectic and contact divisors that we shall be using in the rest of the article. We begin with the following definition: 

\begin{definition}
    A symplectic manifold $(X, \omega)$ is said to be \emph{integral} if $[\omega]$ is contained in the image of the change of coefficients homomorphism $H^2(X; \mathbf{Z}) \to H^2(X; \mathbf{R})$.
\end{definition}

\begin{remark} \label{rem-integral}
    Given a symplectic manifold $(X, \omega)$, one can always find a symplectic form $\omega_0$ on $X$ such that $(X, \omega_0)$ is integral. To see this, let us choose a basis of $H^2(X; \mathbf{R})$, and write $\omega$ as a $\mathbf{R}$-linear combination of closed $2$-forms representing the basis elements. Note that $H^2(X; \mathbf{Q}) \subset H^2(X; \mathbf{R})$ is dense. Therefore, by a small perturbation of the coefficients, we may find a new form $\omega'$ such that $[\omega'] \in H^2(X; \mathbf{Q})$. Since $\omega'$ is a $\mathbf{R}$-linear combination of closed $2$-forms, it is closed. Moreover, since $\omega'$ is $C^\infty$-close to $\omega$, it is non-degenerate. By multiplying $\omega'$ by sufficiently large integer, we can then find a $2$-form $\omega_0$ such that $(X, \omega_0)$ is an integral symplectic manifold.
\end{remark}

\begin{definition}Given an integral symplectic manifold $(X, \omega)$, the \emph{pre-quantum bundle} $L$ over $X$ is a complex line bundle with Chern class $c_1(L) = [\omega]$.
\end{definition}

The following theorem due to Donaldson is a foundational result in symplectic geometry:

\begin{theorem}\cite{Don} \label{thm-don}
$(X^{2n}, \omega)$ be an integral symplectic manifold. For all sufficiently large $k \gg 1$, there exists a codimension $2$ symplectic submanifold $Z \subset X$ such that $[Z] = \mathrm{PD}(k[\omega])$.
\end{theorem}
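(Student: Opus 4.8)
The plan is to follow Donaldson's method of approximately holomorphic geometry. First I would fix an $\omega$-compatible almost complex structure $J$ on the (compact) manifold $X$ together with the associated Riemannian metric $g = \omega(\cdot, J\cdot)$. Since $(X,\omega)$ is integral, the prequantum bundle $L$ carries a Hermitian metric and a unitary connection $\nabla$ whose curvature is $-2\pi i\,\omega$; its tensor powers $L^{\otimes k}$ then have curvature $-2\pi i\,k\omega$, so that $c_1(L^{\otimes k}) = k[\omega]$. The goal is to produce, for every $k \gg 1$, a section $s_k$ of $L^{\otimes k}$ whose zero set $Z_k = s_k^{-1}(0)$ is a smooth codimension-$2$ symplectic submanifold: by construction $[Z_k] = \mathrm{PD}(c_1(L^{\otimes k})) = \mathrm{PD}(k[\omega])$, which is exactly the asserted homology class.

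The crucial structural feature is that the geometry concentrates as $k$ grows. Rescaling the metric to $g_k = k\,g$ turns a $g$-ball of radius $O(k^{-1/2})$ into a unit $g_k$-ball, on which $X$ looks $C^\infty$-uniformly like flat $\mathbf{C}^n$ with $L^{\otimes k}$ trivialized and the connection close to the standard model. In these rescaled coordinates I would call a sequence $(s_k)$ \emph{approximately holomorphic} if $|s_k|$, $|\nabla s_k|$, $|\nabla^2 s_k|$ stay uniformly bounded while $|\bar\partial s_k|$ and $|\nabla\bar\partial s_k|$ decay like $O(k^{-1/2})$. The basic building blocks are the localized (``peak'') sections: in a Darboux chart centered at $p$, the Gaussian profile $e^{-k|z|^2/4}$ times the local frame is approximately holomorphic and concentrated in a $g$-ball of radius $O(k^{-1/2})$ about $p$. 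Gluing these with a partition of unity yields global approximately holomorphic sections with prescribed local behaviour.

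The heart of the argument, and the step I expect to be the main obstacle, is achieving \emph{uniform} (quantitative) transversality: one must find an approximately holomorphic $s_k$ and a constant $\eta > 0$ independent of $k$ with $|\nabla s_k| > \eta$ wherever $|s_k| < \eta$, measured in the rescaled metric. Pointwise transversality is easy, but upgrading it to a uniform estimate over all of $X$ while preserving approximate holomorphicity is delicate. The strategy is Donaldson's globalization scheme: cover $X$ by $O(1)$ families of $g$-balls of radius $O(k^{-1/2})$, each family consisting of balls pairwise far apart at the rescaled scale, and successively perturb $s_k$ by small multiples of peak sections to force transversality on each family in turn. The key local input is the estimate that an approximately holomorphic $\mathbf{C}$-valued function on a ball can be made $\eta$-transverse to $0$ by a perturbation whose size is controlled polynomially in $\log(\eta^{-1})$. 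Because only boundedly many perturbation steps are required, and each step degrades transversality on the earlier families in a controlled way, one can tune the parameters so that the final $s_k$ is uniformly transverse with a definite $\eta$.

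Granting uniform transversality, $Z_k$ is cut out transversally and is therefore a smooth closed codimension-$2$ submanifold representing $\mathrm{PD}(k[\omega])$. It remains to verify that $Z_k$ is symplectic. At a point of $Z_k$ uniform transversality forces $|\nabla s_k| \gtrsim \eta$, while approximate holomorphicity forces $|\bar\partial s_k| = O(k^{-1/2})$; hence the tangent space $T Z_k = \ker(\nabla s_k)$ is $O(k^{-1/2})$-close to a $J$-invariant subspace. Since $\omega(v, Jv) > 0$ for every $v \neq 0$, the restriction $\omega|_{Z_k}$ is non-degenerate once $k$ is large enough, so $Z_k$ is a symplectic submanifold. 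This completes the construction for all sufficiently large $k$.
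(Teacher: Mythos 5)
Theorem \ref{thm-don} is not proved in the paper at all: it is a foundational result quoted directly from Donaldson \cite{Don}, so there is no internal argument for your sketch to match or diverge from. What you have written is a correct outline of Donaldson's own proof: the prequantum bundle $L^{\otimes k}$ with curvature $-2\pi i k \omega$ (so that the transverse zero set of any section is Poincar\'{e} dual to $k[\omega]$), the rescaled metric $g_k = k\,g$ under which the geometry becomes uniformly standard at unit scale, Gaussian peak sections as local building blocks, the globalization scheme that achieves uniform (i.e., $\eta$ independent of $k$) transversality by iterated perturbations over $O(1)$ families of well-separated balls, and finally the observation that $\ker \nabla s_k$ is $O(k^{-1/2})$-close to a $J$-complex subspace, hence symplectic once $k \gg 1$. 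The one point to flag is that the ``key local input'' you invoke --- that an approximately holomorphic function on a ball can be made $\eta$-transverse to $0$ by a perturbation of size roughly $\eta$ times a power of $\log(\eta^{-1})$ --- is itself the deepest analytic step of \cite{Don} (Donaldson deduces it from a Yomdin-type bound on the complexity of real algebraic sets), so your proposal is a faithful roadmap of the cited proof rather than a self-contained argument; for a result the paper itself imports as a black box, that is the appropriate level of detail. Note also that your final step (symplecticity of the zero set) is exactly the argument the paper does spell out in its summary of the proof of the contact analogue, Theorem \ref{thm-imp}, where the inequality $|\overline{\partial}_k s_k| \leq C\eta^{-1} |\partial_k s_k|/\sqrt{k}$ plays the role of your closeness-to-$J$-invariance estimate.
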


We shall call codimension $2$ symplectic submanifolds $Z \subset (X, \omega)$ appearing in the statement of Theorem \ref{thm-don} as \emph{Donaldson divisors}. The contact analogue of Theorem \ref{thm-don} is due to Ibort, Mart\'{i}nez-Torres and Presas:

\begin{theorem}\cite[Theorem 1]{IMP} \label{thm-imp} $(Y^{2n+1}, \xi)$ be a contact manifold and $E \to Y$ be a complex vector bundle of rank $r \leq n$. Then there exists a contact submanifold $Z \subset Y$ with $[Z] = \mathrm{PD}(c_r(E)) \in H_{2n+1-2r}(Y)$. Moreover, the inclusion $Z \to Y$ induces an isomorphism on the homotopy groups $\pi_i$ for all $1 \leq i \leq n-r-1$ and a surjection on $i = n-r$. \end{theorem}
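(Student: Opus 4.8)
The plan is to reconstruct, in the almost-CR geometry of the contact distribution, the approximately holomorphic machinery that underlies Donaldson's Theorem \ref{thm-don}; a direct reduction to the compact symplectic case is unavailable, since the natural symplectic objects attached to $(Y, \xi)$ (its symplectization, or a prequantization) are non-compact, so \cite{Don} cannot be quoted as a black box. Fix a contact form $\alpha$ with $\ker \alpha = \xi$, a complex structure $J$ on the symplectic bundle $(\xi, d\alpha|_\xi)$ compatible with $d\alpha$, and the Reeb field $R$, giving a splitting $TY = \xi \oplus \mathbf{R} R$ and an almost-CR structure. I would decompose covariant derivatives into a horizontal part along $\xi$ — itself split into its $J$-complex-linear and $J$-antilinear pieces $\partial_b$ and $\bar\partial_b$ — and a Reeb part, measuring everything in the rescaled metrics $g_k = k\, g$.

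The key auxiliary object is a Hermitian line bundle $L \to Y$ with a unitary connection whose curvature restricts to $-i\, d\alpha$ along $\xi$. Since $d\alpha$ is exact, $L$ may be chosen \emph{topologically trivial}; this is the decisive difference from the symplectic setting and is exactly why no integrality hypothesis appears. I would then study sections of $E \otimes L^{\otimes k}$ for $k \gg 1$. In the rescaled geometry the curvature of $L^{\otimes k}$ along $\xi$ is uniformly positive with bounded geometry, so $E \otimes L^{\otimes k}$ carries an abundance of \emph{approximately holomorphic} sections: sequences $s_k$ with uniform bounds on $s_k$, $\nabla s_k$, $\nabla^2 s_k$ (in $g_k$) satisfying $|\bar\partial_b s_k| = O(k^{-1/2})$ together with control of the Reeb derivative, built from concentrated peak sections localized near each point. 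Because $L$ is topologically trivial one has $E \otimes L^{\otimes k} \cong E$ as complex bundles, hence $c_r(E \otimes L^{\otimes k}) = c_r(E)$; this is what pins the homology class of the divisor to $\mathrm{PD}(c_r(E))$.

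The analytic heart of the argument is \emph{quantitative transversality}: I would perturb an approximately holomorphic section of the rank-$r$ bundle $E \otimes L^{\otimes k}$ to one uniformly transverse to the zero section, with all estimates independent of $k$. For $r = 1$ this is Donaldson's globalization — a finite cover of $Y$ by Darboux-type balls, successive perturbations by peak sections, and the quantitative Sard lemma for approximately holomorphic functions — and for $r > 1$ one iterates over the $r$ components using Auroux's extension of the main estimate to vector-bundle targets, while keeping the Reeb-direction derivative uniformly small throughout. \textbf{This is the step I expect to be the main obstacle}: running the inductive transversality scheme with uniform constants in the presence of the extra Reeb direction (which has no symplectic analogue) is precisely the technical advance of \cite{IMP}. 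Granting it, set $Z := s_k^{-1}(0)$ for $k$ large; transversality makes $Z$ a smooth closed submanifold of real codimension $2r$ with $[Z] = \mathrm{PD}(c_r(E \otimes L^{\otimes k})) = \mathrm{PD}(c_r(E))$. Approximate holomorphicity forces $T_pZ \cap \xi_p$ to be, to leading order, a $J$-invariant and hence $d\alpha$-symplectic subspace of $\xi_p$ of real dimension $2(n-r)$; consequently $T_pZ \not\subset \xi_p$ and $\alpha|_{TZ}$ is a contact form, so $Z$ is a contact submanifold.

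It remains to establish the Lefschetz-type connectivity of $Z \hookrightarrow Y$, for which I would run an Andreotti--Frankel-style Morse argument on the complement, adapted to the approximately holomorphic setting. Away from $Z$ the function $-\log|s_k|^2$ exhausts $Y \setminus Z$ and, by approximate holomorphicity, is a Morse function that is approximately plurisubharmonic along $\xi$; accounting for the rank $r$ and the single Reeb direction, its critical points all have index at most $n + r$. Dually, $Y$ is obtained from an arbitrarily small tubular neighborhood $\nu(Z)$ of $Z$ by attaching handles of index at least $n - r + 1$, so the pair $(Y, \nu(Z))$ is $(n-r)$-connected. Since $\nu(Z)$ deformation retracts onto $Z$, the long exact sequence of the pair yields that $Z \hookrightarrow Y$ induces an isomorphism on $\pi_i$ for $1 \le i \le n-r-1$ and a surjection on $\pi_{n-r}$, as claimed.
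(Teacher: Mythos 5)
Your proposal follows essentially the same route as the paper's own summary of the \cite{IMP} argument: twist $E$ by high tensor powers of the topologically trivial line bundle with connection $-ik\alpha$ (so the divisor class stays $\mathrm{PD}(c_r(E))$ with no integrality hypothesis), invoke the asymptotically holomorphic and equitransverse sections of \cite{IMP} (the quantitative-transversality core you rightly flag as the step that must be taken on faith from that paper), take $Z = s_k^{-1}(0)$, check $T_pZ \cap \xi_p$ is nearly $J$-complex hence symplectic, and deduce the homotopy statement from Morse theory on $-\log|s_k|^2$. One detail is actually sharper in your write-up: your index bound $n+r$ for the critical points of $-\log|s_k|^2$ (giving handles of index at least $n-r+1$ attached to $\nu(Z)$, hence an $(n-r)$-connected pair) is the correct one, whereas the paper's summary misstates the bound as $n-r$.
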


For the sake of completeness and context for some discussions in Section \ref{sec-app}, we summarize here the proof of Theorem \ref{thm-imp} following \cite{IMP}:

\begin{proof}[Summary of Proof] Let $\alpha$ be a contact $1$-form such that $\xi = \ker \alpha$. Let $J$ be a fiberwise almost complex structure on $\xi$ compatible with the fiberwise symplectic form $d\alpha$. Let $R_\alpha$ denote the Reeb vector field associated to $\alpha$, and $g$ be the metric on $M$ such that $g(v, w) = d\alpha(v, Jw)$ for all $v, w \in \xi$ and $R_\alpha$ is the unit oriented orthonormal vector to $\xi$ with respect to $g$.

Let $L_0 := M \times \mathbf C$ denote the trivial line bundle on $M$, equipped with the connection $1$-form $-i\alpha$. Therefore, the tensor power $L_0^{\otimes k}$ is the trivial bundle naturally equipped with the connection $1$-form $-ik \alpha$. We pick an arbitrary connection operator $\nabla^E$ on $E$. We denote $E_k := L \otimes L_0^{\otimes k}$ and let $\nabla_k$ be the connection operator on $E_k$ defined by $\nabla_k := \nabla^E - ik\alpha$. Let $\partial_k$ (resp. $\overline{\partial}_k$) denote the $J$-linear (resp. $J$-antilinear) parts of $(\nabla_k s_k)|_{\xi}$.

The construction of \cite{IMP} produces, for any given constants $C, \eta > 0$, a family of sections $s_k : M \to E_k$ ($k \geq 1$) such that:

\begin{enumerate}
\item $\{s_k : k \geq 1\}$ is \emph{asymptotically contact-holomorphic}, i.e., for all $p \in M$, 
$$|s_k(p)| \leq C, \, |\nabla_k s_k(p)| \leq C \sqrt{k} \, \text{ and } \, |\overline{\partial}_k s_k(p)| \leq C.$$
\item $\{s_k : k \geq 1\}$ is \emph{equitransverse}, i.e., for all $p \in M$ such that $|s_k(p)| \leq \eta$,
$$|\partial_k s_k(p)| \geq \eta \sqrt{k}.$$
\end{enumerate}
Condition $(2)$ ensures that $s_k$ is transverse to the zero section $\mathbf{0} \subset E_k$. Thus, $Z_k := s_k^{-1}(\mathbf{0}) \subset M$ defines a codimension-$2$ submanifold. By construction $c_1(E) = c_1(E_k)$ is Poincar\'{e} dual to $[Z_k]$. Furthermore, $(Z_k, \xi|_{Z_k})$ is a contact manifold for $k \gg 1$. Indeed, 
$$T_k Z_p \cap \xi_k = \ker (\nabla_k s_k)|_{\xi_k} = \ker (\overline{\partial}_k s_k + \partial_k s_k).$$
By Condition $(1)$ and $(2)$, $|\overline{\partial}_k s_k| \leq C\eta^{-1} |\partial_k s_k|/\sqrt{k}$. This ensures for sufficiently large $k \gg 1$, $T_k Z_p \cap \xi_k$ is close (in the Grassmannian) to a $J$-complex subspace of $(\xi_k, d\alpha_k, J)$, hence it is symplectic. See \cite[Lemma 4]{IMP} for more details. The statement regarding homotopy groups of $Z_k$ follows from \cite[Section 5.1]{IMP} which shows $-\log |s_k|^2 : M \setminus Z_k \to \mathbf R$ defines a proper Morse function on $M \setminus Z_k$ with all critical points having at most $n - r$.
\end{proof}

The following result pertaining to the complement of Donaldson divisors is due to Giroux \cite{Giroux_don}:

\begin{theorem}\cite[Theorem 2]{Giroux_don} \label{thm-girdon}
    Let $(X^{2n}, \omega)$ be an integral symplectic manifold. Let $Z \subset (X, \omega)$ be a Donaldson divisor (see, Theorem \ref{thm-don}). Let $\nu(Z)$ be a tubular neighborhood of $Z$ in $X$. Then, $(X \setminus \nu(Z), \omega)$ admits the structure of a Weinstein domain.
\end{theorem}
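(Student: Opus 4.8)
The plan is to realize $Z$ as the transverse zero locus of a section and to exhibit $-\log\|s\|^2$ as a strictly plurisubharmonic exhaustion of the complement $X \setminus Z$, whose regular sublevel sets are precisely the domains $X \setminus \nu(Z)$. Recall that the divisor in Theorem \ref{thm-don} is produced by Donaldson \cite{Don} as $Z = s^{-1}(\mathbf{0})$ for an asymptotically holomorphic section $s$ of the $k$-th power $L^{\otimes k}$ of the pre-quantum bundle, where $c_1(L^{\otimes k}) = k[\omega]$. Fixing an $\omega$-compatible almost complex structure $J$, a Hermitian metric $\|\cdot\|$, and a Hermitian connection $\nabla$ on $L^{\otimes k}$ with curvature $F_\nabla = -2\pi i k\,\omega$ on its $(1,1)$-part, I would set $\varphi := -c\,\log\|s\|^2 : X \setminus Z \to \mathbf{R}$, where $c > 0$ is a normalization constant fixed below.

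First I would observe that $\varphi$ is exhausting: since $s$ vanishes transversally along $Z$ one has $\|s\| \to 0$, hence $\varphi \to +\infty$, as one approaches $Z$, while $\varphi$ is bounded below and proper on $X \setminus Z$. The heart of the argument is the curvature computation. Writing $d^c\varphi = d\varphi \circ J$ and $\lambda := -d^c\varphi$, the Poincar\'e--Lelong type identity gives, away from $Z$,
$$ d\lambda \;=\; -dd^c\varphi \;=\; \omega + (\text{error}), $$
once $c$ is chosen to normalize the leading curvature term; here the error is governed by the antiholomorphic derivative $\overline{\partial}_k s$ and by the torsion of $J$. For a genuinely holomorphic $s$ on a K\"ahler manifold the error vanishes and $-dd^c\varphi = \omega$ exactly, so $\varphi$ is strictly $J$-convex and $\lambda$ is a Liouville primitive for $\omega$ on $X \setminus Z$. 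In our setting $s$ is only asymptotically holomorphic, but the defining estimates $|\overline{\partial}_k s| \leq C$ against $|\partial_k s| \geq \eta\sqrt{k}$ --- exactly of the type appearing in Theorem \ref{thm-imp} --- force the error to be of lower order in $k$, so that for all $k \gg 1$ the form $-dd^c\varphi$ stays positive on $J$-complex lines. Thus $\varphi$ is strictly $J$-convex, $\lambda = -d^c\varphi$ is a genuine Liouville form for $\omega$, and its Liouville vector field $v$ (characterized by $i_v\omega = \lambda$; for the metric $g = \omega(\cdot, J\cdot)$ it is a positive multiple of the gradient $\nabla\varphi$) is gradient-like for $\varphi$.

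With strict $J$-convexity in hand, I would pick a regular value $a$ of $\varphi$ large enough that $\{\varphi > a\}$ is contained in, and isotopic to, the given tubular neighborhood $\nu(Z)$; since all tubular neighborhoods are isotopic, we may arrange $\nu(Z) = \{\varphi > a\}$. Then $X \setminus \nu(Z)$ is the sublevel set $\{\varphi \leq a\}$, a compact manifold whose boundary is the regular level set $\{\varphi = a\}$. Because $v$ points transversally outward there (it increases $\varphi$) and satisfies $\mathcal{L}_v\omega = \omega$, the triple $(\{\varphi \leq a\}, \omega, v)$ is a Liouville domain. Finally, to upgrade it to a Weinstein domain, I would $C^2$-perturb $\varphi$ rel a collar of the boundary to a Morse function that remains $J$-convex; by the standard theory of $J$-convex functions this keeps $v$ gradient-like, furnishing the Morse function required in the definition of a Weinstein domain.

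The main obstacle is concentrated entirely in the second paragraph: establishing strict $J$-convexity of $\varphi$ in the almost complex, merely asymptotically holomorphic, setting. Since $J$ need not be integrable and $s$ is not holomorphic, the clean K\"ahler identity is unavailable, and one must bound the contributions of $\overline{\partial}_k s$ and of the torsion of $J$ against the leading $\omega$-term. The delicate region is the collar near $Z$, where $\|s\|$ is small and the logarithm is nearly singular, so that error terms scaling with inverse powers of $\|s\|$ threaten positivity; controlling them is precisely where the asymptotic-holomorphicity and equitransversality estimates (and the largeness of $k$) are indispensable. Everything else --- the exhaustion property, the passage to a regular sublevel set, and the Morse perturbation --- is routine once this positivity is secured.
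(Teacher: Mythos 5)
First, a point of context: the paper does not prove this statement at all --- it is quoted from Giroux \cite{Giroux_don} --- so there is no internal proof to compare against; your route (realize $Z = s^{-1}(\mathbf{0})$ for a Donaldson section $s$ of $L^{\otimes k}$, set $\varphi = -\log\|s\|^2$, $\lambda = -d^c\varphi$, and extract a Weinstein structure from a Poincar\'e--Lelong computation) is in fact the strategy of Giroux's actual proof. However, as written your sketch has a genuine gap at its central claim. It is \emph{not} true that for $k \gg 1$ the form $-dd^c\varphi$ is positive on $J$-complex lines on all of $X \setminus Z$, i.e.\ arbitrarily close to $Z$: the expansion of $dd^c\log\|s\|^2$ contains cross terms of the schematic shape $\partial s \otimes \overline{\partial} s / \|s\|^2$, which have no sign and are bounded only by $C^2\sqrt{k}/\|s\|^2$. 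Against the curvature term $\sim k\,\omega$ this is harmless only where $\|s\| \gtrsim k^{-1/4}$; closer to $Z$ positivity genuinely fails, no matter how large $k$ is. A model computation makes this concrete: for $s = z + \epsilon \bar{z}$ on $\mathbf{C}$ with the flat metric,
\begin{equation*}
i\,\partial\overline{\partial}\log|s|^2 \;=\; -2\epsilon\,\mathrm{Re}\bigl(s^{-2}\bigr)\, i\, dz \wedge d\bar{z},
\end{equation*}
which takes both signs and blows up like $\epsilon/|s|^2$ as $s \to 0$. So the largeness of $k$ does \emph{not} tame the collar near $Z$ --- contrary to what your closing paragraph suggests, the bad region shrinks with $k$ but never disappears. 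The repair (and this is effectively what Giroux does) is to never enter that region: take $\nu(Z) := \{\|s\| < \delta\}$ for a fixed $0 < \delta < \eta$, which is a tubular neighborhood by the equitransversality estimate, and prove strict $J$-convexity only on $\{\|s\| \geq \delta\}$, where every error term is $O(\delta^{-2}k^{-1/2})$ relative to the curvature term. Since the theorem only concerns the complement of a tubular neighborhood, this suffices, but your sketch must be restructured around it.

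There is a second, smaller gap: you assert that $\lambda = -d^c\varphi$ ``is a genuine Liouville form for $\omega$''. It is not: even after normalization, $-dd^c\varphi = \omega + E$ where the error $E$ is small but nonzero (it vanishes identically only in the integrable, honestly holomorphic case). What your argument produces directly is a Weinstein domain for the nearby form $\omega + E$, whereas the statement requires one for $\omega$ itself. This is fixable --- $\omega$ is exact on $X \setminus Z$ since $k[\omega]$ is Poincar\'e dual to $Z$ and hence restricts to zero on the complement, and $E$ is exact with a controlled small primitive coming from the same computation, so one can correct $\lambda$ by that primitive (or run a Moser argument); the gradient-like property of the Liouville field survives a $C^0$-small change of $\lambda$. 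But some such step has to be supplied, and it is missing.
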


\begin{definition}
A symplectic manifold $(M, \omega)$ with non-empty boundary $\partial M \neq \emptyset$ is \emph{convex} if there exists a vector field $v$ defined in a collar neighborhood of $\partial M \subset M$ such that $v$ points transversely outwards of $\partial M$, and $\mathcal{L}_v \omega = \omega$. In this case, the distribution $\xi := \ker(i_v \omega|_{\partial M})$ defines a contact structure on $\partial M$. 
\end{definition}

\begin{remark}
    A Liouville manifold $(W, \omega, v)$ is an example of a convex symplectic manifold with contact boundary $(\partial W, \xi)$, where $\xi = \ker(i_v \omega)$.
\end{remark}

A key technical ingredient in our article will be the following relative version of Theorem \ref{thm-don} for symplectic manifolds with boundary, due to Presas \cite{Presas}:

\begin{theorem}\cite[Theorem 1.1]{Presas}\label{thm-presas}
Let $(M, \omega)$ be a convex symplectic manifold which is integral, with prequantizable bundle $L$ and with contact boundary $(C, \xi)$. Fix a rank $r$ complex vector bundle $E$ over $M$. For all sufficiently large $k \gg 1$, there exists a symplectic submanifold $W$ of $M$ {transverse to C}, which is Poincar\'{e} dual to $c_r(L^{\otimes k} \otimes E)$, satisfying that $W \cap C$ is a contact submanifold of $(C, \xi)$.
\end{theorem}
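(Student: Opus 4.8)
The plan is to adapt Donaldson's approximately holomorphic techniques (Theorem \ref{thm-don}) to the convex setting, using the collar structure near the contact boundary to interface with the asymptotically contact-holomorphic machinery of Ibort--Mart\'{i}nez-Torres--Presas (Theorem \ref{thm-imp}). First I would fix an almost complex structure $J$ compatible with $\omega$ which, in a collar neighborhood $C \times (-\varepsilon, 0]$ of the boundary, is adapted to the symplectization structure $\omega = d(e^t \alpha)$, where $\alpha$ is a contact form for $\xi$ and $t$ is the collar coordinate. I would equip $L$ with a Hermitian connection whose curvature is $-i\omega$, so that on the collar the induced connection on $L^{\otimes k}$ restricts along the contact distribution to the operator $\nabla^E - ik\alpha$ appearing in the proof of Theorem \ref{thm-imp}.

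The core of the argument is to produce a sequence of sections $s_k$ of $L^{\otimes k} \otimes E$ that is (i) asymptotically holomorphic and (ii) uniformly transverse to the zero section in the sense $|\partial_k s_k| \geq \eta\sqrt{k}$ wherever $|s_k| \leq \eta$, while additionally being (iii) approximately independent of the normal coordinate $t$ on the collar. Condition (iii) is what forces the zero set $W = s_k^{-1}(\mathbf{0})$ to be, near the boundary, a product $Z \times (-\varepsilon, 0]$ where $Z = (s_k|_C)^{-1}(\mathbf{0})$; this simultaneously guarantees that $W$ is transverse to $C$ and that $W \cap C = Z$ is cut out by the restricted sections $s_k|_C$, which are precisely the asymptotically contact-holomorphic sections of Theorem \ref{thm-imp}, hence a contact submanifold of $(C, \xi)$.

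To build such $s_k$ I would proceed in two stages. Near the boundary, I would first run the construction underlying Theorem \ref{thm-imp} on $(C, \xi)$ to obtain uniformly transverse asymptotically contact-holomorphic sections $\sigma_k$ of $(E \otimes L^{\otimes k})|_C$, and pull them back to the collar so as to be $t$-independent; this settles transversality in a neighborhood of $C$. In the interior, I would use Donaldson's reference sections (Gaussian peak sections concentrated at a sufficiently dense net of points) together with the quantitative globalization lemma to upgrade an asymptotically holomorphic extension of $\sigma_k$ into the interior to one that is uniformly transverse on all of $M$. The essential point is that the globalization perturbations are taken with support in the interior, away from the collar, so that they do not disturb the product structure already achieved near $C$, while the estimates ensure the perturbed sections remain asymptotically holomorphic and transverse across the overlap region.

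The main obstacle is precisely this matching in the transition region: one must reconcile Donaldson's interior transversality theory with the contact transversality theory on the boundary, ensuring that the cutoff interpolating between the $t$-independent collar sections and the globally perturbed interior sections destroys neither the uniform transversality nor the asymptotic holomorphicity. This calls for a relative version of the globalization lemma in which the perturbation is constrained to vanish near $C$ yet still achieves the estimate $|\partial_k s_k| \geq \eta\sqrt{k}$ throughout $M$; controlling the error terms introduced by the cutoff --- which are of order $1/\sqrt{k}$ and hence asymptotically negligible --- is where the bulk of the quantitative work lies.
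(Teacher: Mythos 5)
First, a point of order: the paper does not prove this statement at all --- it is imported verbatim as Theorem~1.1 of Presas's paper \cite{Presas} and used as a black box (it is the key input in the proof of Theorem~\ref{thm-main}). So there is no in-paper proof to compare against; your proposal has to be measured against Presas's actual argument, whose broad architecture --- Donaldson-type approximately holomorphic theory in the interior, interfaced with the Ibort--Mart\'inez-Torres--Presas contact machinery near the boundary --- your sketch does reproduce correctly.

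As a proof attempt, however, it contains one step that fails as stated and one genuine omission. The failing step is your condition (iii): a section of $L^{\otimes k}\otimes E$ that is ($t$-independent or approximately $t$-independent) on the collar cannot be asymptotically holomorphic near its zero set. On the collar $\omega = d(e^t\alpha)$, the connection on $L^{\otimes k}$ has connection form $-ike^t\alpha$, and with $J\partial_t = R_\alpha$ the component of $\overline{\partial}_k s$ along the complex line spanned by $(\partial_t, R_\alpha)$ is $\tfrac12(\nabla_{\partial_t}s + i\nabla_{R_\alpha}s)$. For a $t$-independent section $s(x,t)=\sigma(x)$ this equals $\tfrac12\left(i\,\partial_{R_\alpha}\sigma + ke^t\sigma\right)$, whose norm is of order $k|\sigma|$ --- not $O(1)$, and not even $O(\sqrt{k})$ --- so asymptotic holomorphicity is destroyed by a factor of $\sqrt{k}$ precisely in the region $|\sigma|\sim\eta$ where uniform transversality must be verified; your claim that the collar errors are ``of order $1/\sqrt{k}$'' is off by a factor of $k$. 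The correct extension of a contact-holomorphic section $\sigma$ from $C$ into the collar carries a $t$-dependent exponential weight $f(t)$ solving $f' \pm ke^tf = 0$ (sign depending on curvature conventions), i.e.\ $f(t) = e^{\mp k(e^t - 1)}$; since this weight is nowhere vanishing, the zero set is still the product $\{\sigma = 0\}\times(-\varepsilon,0]$, so your desired geometric conclusions (transversality to $C$, and $W \cap C$ cut out by $\sigma$) survive, but every estimate in your interpolation argument must be redone carrying this rapidly varying weight. The second issue is one of completeness: the entire analytic content of the theorem --- the ``relative globalization lemma'' producing perturbations supported in the interior that achieve $|\partial_k s_k|\geq\eta\sqrt{k}$ on all of $M$ without disturbing the collar, together with the fact that uniform transversality is not preserved under $C^0$-small interpolation between two transverse sections (one needs quantitative $C^1$-closeness, or Donaldson's scheme of localized perturbations ordered over a net of points) --- is named in your final paragraph but never carried out. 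In Presas's paper this is the bulk of the work; naming the lemma is not proving it.
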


\section{Proof of Theorem \ref{thm-main}}\label{sec-mainthm}

Given an oriented manifold with boundary $(W, \partial W)$, we define the \emph{double} of $W$ by gluing $W$ with its orientation reversed copy $\overline{W}$ along the boundary by identity. The resulting manifold will be denoted as $D(W) := W \cup_{\partial} \overline{W}$. We begin with the following observation.

\begin{proposition}\label{prop-doublecircle}
Let $(W, \omega, v)$ be a Liouville domain. Then the manifold $D(W) \times S^1$ admits a Liouville-fillable (hence, tight) contact structure.
\end{proposition}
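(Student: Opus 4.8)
The plan is to realize $D(W) \times S^1$ as the (corner-smoothed) boundary of a product of two Liouville domains, so that Lemma \ref{lem-prod} produces the required Liouville filling directly, after which Theorem \ref{thm-filltight} upgrades Liouville-fillability to tightness.

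First I would record the topological identity underlying the construction: for any compact manifold with boundary $W$, the double $D(W)$ is diffeomorphic to the corner-smoothed boundary $\partial(W \times [0,1])$, since $\partial(W \times [0,1]) = (W \times \{0,1\}) \cup (\partial W \times [0,1])$ consists of two oppositely-oriented copies of $W$ joined along a collar of $\partial W$. Consequently, using that $S^1$ is closed so that $\partial(P \times S^1) = \partial P \times S^1$, one obtains $D(W) \times S^1 = \partial(W \times [0,1]) \times S^1 = \partial\big(W \times ([0,1] \times S^1)\big) = \partial(W \times A)$, where $A := [0,1] \times S^1$ is the closed annulus. This reduces the statement to exhibiting $W \times A$ as a Liouville domain.

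Next I would equip $A$ with an explicit Liouville structure. Parametrizing $A = \{(p,q) : p \in [-1,1],\ q \in S^1\}$, I would take $\omega_A = dp \wedge dq = d(p\,dq)$, Liouville form $\lambda_A = p\,dq$, and Liouville field $v_A = p\,\partial_p$; one checks $i_{v_A}\omega_A = \lambda_A$ and that $v_A = p\,\partial_p$ points transversely \emph{outward} along \emph{both} boundary circles $\{p = \pm 1\}$, so $(A, \omega_A, v_A)$ is a Liouville domain (with two boundary components). Since $(W, \omega, v)$ is a Liouville domain by hypothesis, Lemma \ref{lem-prod} then shows $W \times A$ admits the structure of a Liouville domain, whose boundary, after the corner-smoothing of that lemma, is diffeomorphic to $\partial(W \times A) = D(W) \times S^1$. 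Hence $D(W) \times S^1$ is Liouville-fillable, and Theorem \ref{thm-filltight} gives tightness.

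The only genuine care is in the bookkeeping of corners and the choice of Liouville structure on the annulus, which I expect to be the main (though mild) obstacle. A tempting alternative — gluing two copies of the Liouville domain $W \times D^2$ along their common boundary face $\partial W \times D^2$ — also produces a manifold with boundary $D(W) \times S^1$, but there the two Liouville fields both point outward along the gluing face and clash, so no honest Liouville structure survives. The annulus formulation sidesteps this precisely because the fold locus $\partial W$ of the double becomes interior collar data $\partial W \times [0,1] \times S^1$ inside the \emph{single} product $W \times A$, where the field $v \oplus v_A$ is globally defined; relatedly, it is essential to use the cotangent-type field $v_A = p\,\partial_p$ rather than the radial field of a disk, as the latter would point inward along one boundary circle. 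I would finally double-check that the corner-smoothing in Lemma \ref{lem-prod} returns a boundary diffeomorphic to $D(W) \times S^1$ (rather than some competing smoothing), which follows from the standard identification $\partial(W \times [0,1]) \cong D(W)$.
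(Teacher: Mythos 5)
Your proposal is correct and follows essentially the same route as the paper: both realize $D(W)\times S^1$ as the corner-smoothed boundary of $W \times (\text{annulus})$, equip the annulus with a Liouville structure whose Liouville field exits both boundary circles, and then invoke Lemma \ref{lem-prod} and Theorem \ref{thm-filltight}. The only (cosmetic) difference is your choice of the cotangent-type structure $\lambda_A = p\,dq$, $v_A = p\,\partial_p$ on $[-1,1]\times S^1$, where the paper uses the radial model $\omega = r\,dr\wedge d\theta$, $v_0 = \tfrac{1}{2}\left(r - \tfrac{1}{r}\right)\partial_r$ on an annulus in $\mathbf{R}^2$; both satisfy the same requirements.
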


\begin{proof}
Consider the annulus $S^1 \times [-1, 1] \cong D^2_{2}(0) \setminus D^2_{0.5}(0) \subset \mathbf{R}^2$ with the exact symplectic form $\omega = r dr \wedge d\theta$. Let us define a vector field 
$$v_0 := \frac{1}{2} \left ( r - \frac{1}{r} \right ) \partial_r$$
Then, we calculate: 
$$i_{v_0} \omega = \frac{r^2}{2} d\theta - \frac{1}{2} d\theta$$
Hence, $\mathcal{L}_{v_0} \omega = d i_{v_0} \omega = \omega$, since $d\theta$ is closed. Moreover, $v_0$ points transversely out of both the inner as well as the outer boundary of the annulus $D^2_{2}(0) \setminus D^2_{0.5}(0)$. Thus, $(S^1 \times [-1, 1], \omega, v_0)$ is a Liouville domain. The product $W \times (S^1 \times [-1, 1])$ also admits the structure of a Liouville domain, by Lemma \ref{lem-prod}. Observe, 
$$\partial (W \times (S^1 \times [-1, 1])) = D(W) \times S^1.$$
Therefore, the manifold $D(W) \times S^1$ admits a Liouville-fillable contact structure. By Theorem \ref{thm-filltight}, $D(W) \times S^1$ admits a tight contact structure.
\end{proof}

\begin{proposition}[Contact ascent]\label{prop-contasc}
Let $M^{2n-1}$ be an odd-dimensional closed connected oriented manifold.  There exists a Liouville-fillable contact manifold $Y^{2n+1} = \partial P$ with Liouville filling $P$ and a map $f : Y \to M$ such that for any regular fiber $F := f^{-1}(p)$, $[F] \in H_2(P; \mathbf{Z})$ is non-torsion.
\end{proposition}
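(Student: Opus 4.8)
The plan is to manufacture $Y$ and its filling from a symplectic domination of $M \times S^1$, convert that into a Liouville domain by deleting a Donaldson divisor, and then apply the doubling construction of Proposition \ref{prop-doublecircle}. Concretely, I would first invoke Fine--Panov \cite{FP} to produce a closed symplectic manifold $(X^{2n}, \omega)$ together with a map $h \colon X \to M \times S^1$ of degree $d \neq 0$; after the perturbation of Remark \ref{rem-integral} I may assume $(X,\omega)$ is integral. The homological feature I want to extract is that the slices $X_\theta := (\pi_{S^1} \circ h)^{-1}(\theta)$ are closed cooriented $(2n-1)$-cycles on which $\pi_M \circ h$ restricts to a degree-$d$ map onto $M$; dually, writing $u \in H^{2n-1}(M;\mathbf{Q})$ for the orientation class, the product $(\pi_M \circ h)^* u \smile (\pi_{S^1} \circ h)^* [d\theta] = h^*(u \times [d\theta])$ is $d$ times a generator of $H^{2n}(X;\mathbf{Q})$, so $(\pi_M \circ h)^* u \neq 0$ in $H^{2n-1}(X;\mathbf{Q})$. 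This nonvanishing, forced purely by $d \neq 0$, is what will eventually make the fibre essential.

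Next I would apply Donaldson's theorem (Theorem \ref{thm-don}) and Giroux's complement theorem (Theorem \ref{thm-girdon}) to a Donaldson divisor $Z \subset X$, obtaining the Weinstein (hence Liouville) domain $W := X \setminus \nu(Z)$ of dimension $2n$. Proposition \ref{prop-doublecircle} then hands me a Liouville-fillable contact structure on $Y := D(W) \times S^1$ with explicit filling $P = W \times A$, where $A$ is the annulus, so $\dim P = 2n+2$ as required. It remains to define $f \colon Y \to M$ and to verify the statement about the fibre $F = f^{-1}(p)$, which will be a closed surface of the shape (a $1$-cycle in $D(W)$) $\times\, S^1$.

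The heart of the argument, and the step I expect to be the main obstacle, is arranging $[F]$ to be non-torsion in $H_2(P;\mathbf{Z})$. Here the Weinstein structure is decisive: since $W$ retracts onto an isotropic skeleton of dimension $\leq n$, one has $H^{k}(W;\mathbf{Q}) = 0$ for $k > n$, and in particular $H^{2n-1}(W;\mathbf{Q}) = 0$ once $n \geq 2$. Feeding this into the Lefschetz-duality description $H_2(P) \cong H_2(W) \oplus H_1(W)$ and the long exact sequence of $(P, \partial P)$, one finds (for $n$ large, the low cases handled separately) that $\iota_* [F]$ is non-torsion if and only if $g^* u \neq 0$ in $H^{2n-1}(D(W);\mathbf{Q})$, where $g \colon D(W) \to M$ is the map with $f = g \circ \mathrm{pr}_{D(W)}$; equivalently, some closed $(2n-1)$-cycle in $D(W)$ must map to $M$ with nonzero degree. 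The difficulty is that the naive symmetric double $g = a \cup_\partial a$ of $a := \pi_M \circ h|_W$ fails this test: the two copies carry opposite orientations, the degree of $g$ on any doubled cycle cancels, and $g^* u = 0$.

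To defeat this cancellation I would build $g$ asymmetrically, again exploiting the dimension bound on $W$. Because $W$ is homotopy equivalent to a complex of dimension $\leq n$ and $n \leq 2n-2 < \dim M$, the map $a$ is freely homotopic to a map $\bar{a}$ whose image lies in the $n$-skeleton of $M$ and hence misses a generic point $p$; such an $\bar a$ has degree $0$ on every $(2n-1)$-cycle. I would then let $g$ equal $a$ on one copy of $W$ inside $D(W)$ and $\bar a$ on the other, interpolating by the free homotopy across a collar $\partial W \times [0,1]$. With this choice only the first copy contributes, so $g$ carries the doubled slice coming from $X_\theta$ to $M$ with net degree $d \neq 0$, giving $g^* u \neq 0$ and hence $[F]$ non-torsion. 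Confirming that the collar introduces no cancelling contribution, and treating the low-dimensional cases $n \leq 1$ separately (where the skeleton bound is vacuous), are the remaining technical points I would need to nail down.
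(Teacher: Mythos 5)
Your setup (Fine--Panov domination of $M\times S^1$, Donaldson divisor, Giroux's theorem, and the doubling $Y = D(W)\times S^1$ with filling $P = W\times(S^1\times[-1,1])$) coincides with the paper's, and your diagnosis of the central difficulty is genuinely important: the paper itself takes the symmetric route. Its map $f$ restricts to $(w,z,\pm 1)\mapsto \pi_M(g(w))$ on both ends $W\times S^1\times\{\pm 1\}$ of $\partial P$, and it asserts $[f^{-1}(p)] = 2\alpha\otimes[S^1]$. As you observe, this cannot be: under the obvious identification of both ends with $W\times S^1$, the two ends carry \emph{opposite} boundary orientations, while the map to $M$ is literally the same on both, so the fiber components $\gamma_i\times S^1\times\{+1\}$ and $\gamma_i\times S^1\times\{-1\}$ acquire opposite fiber orientations and their classes cancel in $H_2(P;\mathbf{Q})$; the correct value of the paper's sum is $0$, not $2\alpha\otimes[S^1]$. (A toy model makes this vivid: doubling $a:[0,1]\to S^1$, $a(x)=x$, produces a map $\partial([0,1]\times[-1,1])\to S^1$ of degree $0$, the two horizontal edges being traversed oppositely.) So your objection to the symmetric construction is correct, and it in fact exposes an orientation error in the paper's own computation of the fiber class.

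However, your repair does not work, and the point you flagged as a remaining technicality (``the collar introduces no cancelling contribution'') is exactly where it dies. The class you must make non-zero is $\iota_*[F] = \iota_*\mathrm{PD}_Y(f^*u) \in H_2(P;\mathbf{Q})$, and this depends only on the \emph{homotopy class} of $f: Y\to M$. Your glued map $g = a\cup_H\bar a$ is homotopic to the symmetric fold $a\circ r$, where $r: D(W)\to W$ is the folding map: take $G_s = a$ on the first copy, $G_s(x,t) = H(x,st)$ on the collar, and $G_s = H(\cdot,s)$ on the second copy, so that $G_0 = a\circ r$ and $G_1 = g$. Hence $f = g\circ\mathrm{pr}_{D(W)}$ has the same (vanishing) fiber class in $H_2(P;\mathbf{Q})$ as the symmetric map. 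Concretely, the homotopy $H$ \emph{cannot} avoid $p$ over $\partial W$: the obstruction to compressing $a:(W,\partial W)\to (M, M\setminus p)$ into $M\setminus p$ is $a^*\tilde u\in H^{2n-1}(W,\partial W;\mathbf{Q})$ ($\tilde u$ the local class at $p$), whose Lefschetz dual is precisely $\alpha\neq 0$; the fiber circles swept out in the collar then represent exactly $-\alpha\times[S^1]$ and cancel the first copy's contribution. Relatedly, your criterion ``$\iota_*[F]$ non-torsion iff $g^*u\neq 0$ in $H^{2n-1}(D(W);\mathbf{Q})$'' fails in the ``if'' direction: the symmetric double has $g^*u = r^*(a^*u)$, which is typically non-zero ($r^*$ is injective since $r\circ i_+ = \mathrm{id}$), yet its fiber class in $H_2(P;\mathbf{Q})$ vanishes. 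The correct requirement is that $f^*u$ not lie in the image of $H^{2n-1}(P;\mathbf{Q})\to H^{2n-1}(Y;\mathbf{Q})$, equivalently $r_*\mathrm{PD}_{D(W)}(g^*u)\neq 0$ in $H_1(W;\mathbf{Q})$, and no map homotopic to one factoring through $r$ can achieve this. Repairing the proposition therefore requires a genuinely different map or filling, not a homotopy of the symmetric one; neither your argument nor the paper's, as written, supplies this.
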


\begin{proof}

Note that $M \times S^1$ is a closed connected oriented manifold of even dimension $2n$. By \cite[Theorem 1]{FP}, there exists a symplectic manifold $X^{2n}$ and a positive degree map $g : X \to M \times S^1$. By Remark \ref{rem-integral}, we may choose an integral symplectic form $\omega$ on $X$. Let $\Sigma^{2n-2} \subset X$ be a Donaldson divisor of $(X, \omega)$. Let $\nu(\Sigma)$ denote a $\varepsilon$-neighborhood of $\Sigma \subset X$. By a slight abuse of notation, we shall identify $\nu(\Sigma)$ with the unit disk bundle of the normal bundle of $\Sigma \subset X$. Let $\pi : \nu(\Sigma) \to \Sigma$ denote the normal bundle projection.

Let $W := X \setminus \nu(\Sigma)$. Then $W$ is a Liouville domain (in fact, it is a Weinstein domain, by Theorem \ref{thm-girdon}). By Lemma \ref{lem-prod}, $W \times D^2$ is also a Liouville domain. Let $Z = \partial (W \times D^2)$. Observe,
\begin{equation}\label{eq-decZ}Z = W \times S^1 \cup \partial W \times D^2\end{equation}
Let $Y = D(W) \times S^1$, and $P := W \times (S^1 \times [-1, 1])$. By Proposition \ref{prop-doublecircle}, $P$ is a Liouville filling of $Y$. We shall define the map $f : Y \to M$ as a composition of several maps:
$$f : Y \stackrel{k}{\to} Z \stackrel{h}{\to} X \stackrel{g}{\to} M \times S^1 \stackrel{\pi_M}{\to} M$$
The map $\pi_M : M \times S^1 \to M$ is projection to the first factor, and $g : X \to M \times S^1$ is the symplectic domination defined earlier. It remains to define the maps $k$ and $h$. 

\begin{enumerate}[topsep=1ex,itemsep=1ex,partopsep=1ex,parsep=1ex, wide=0pt, font=\itshape]
\item[Construction of $k : Y \to Z$:] Let $\tau_0$ be the orientation-reversing involution on $D(W) = W \cup_\partial \overline{W}$ defined by reflecting along the common boundary $\partial W = \partial \overline{W}$. We define,
\begin{gather*}
    \tau : Y \to Y, \\ \tau(w, z) = (\tau_0(w), \overline{z})
\end{gather*}
Note that $\tau$ is an orientation-preserving involution of $Y = D(W) \times S^1$, as it reverses the orientation of both factors $D(W)$ and $S^1$ individually. Writing $D(W) = W \cup \partial W \times [-1, 1] \cup \overline{W}$, we see $\tau_0$ acts on $D(W)$ by exhanging the first and third components and on the bi-collar $W \times [-1, 1]$ by $\tau_0(w, t) = (w, -t)$. Therefore, $\tau$ acts on 
$$Y = D(W) \times S^1 = (W \times S^1) \cup \partial W \times (S^1 \times [-1, 1]) \cup (\overline{W} \times S^1),$$
by exchanging the first and third components and on $W \times (S^1 \times [-1, 1])$ by $\tau(w, (z, t)) = (w, \overline{z}, -t)$. The quotient of $S^1 \times [-1, 1]$ by the involution $(z, t) \mapsto (\overline{z}, -t)$ is $D^2$. Therefore, 
$$Y/\tau \cong W \times S^1 \cup \partial W \times D^2 \cong Z$$
We define $k : Y \to Y/\tau \cong Z$ as the quotient map.

\item[Construction of $h : Z \to X$:] Recall $\nu(\Sigma)$ is the unit disk bundle of the normal bundle of $\Sigma \subset X$. The normal bundle possesses a fiberwise $\mathbf{R}^\times$-action by scaling. We begin by defining the following map:
\begin{gather*}\Phi : \partial \nu(\Sigma) \times [0, 1] \to \nu(\Sigma),\\
\Phi(x, r) = 
\begin{cases} 
      \pi(x), & \text{if}\;\; r = 0 \\
      r \cdot x, & \text{if} \;\; r \in (0, 1]\\
   \end{cases}
\end{gather*}
Note that $\Phi(x, r)$ belongs to the circle bundle of radius $r$ in $\nu(\Sigma)$ for $r \neq 0$. Next, we define $h : Z \to X$ piecewise with respect to the decomposition (\ref{eq-decZ}):
\begin{gather*}h : Z = W \times S^1 \cup \partial W \times D^2 \to W \cup \nu(\Sigma) = X,\\
h(z) = \begin{cases} 
      w, & \text{if}\;\; z = (w, \theta)\in W \times S^1 \\
      \Phi(w, r), & \text{if} \;\; z = (w, (r, \theta)) \in \partial W \times D^2\\
   \end{cases}
\end{gather*}
Here, we use $\partial W = \partial \nu(\Sigma)$. Note that for any $z = (w, 1) \in \partial W = \partial \nu(\Sigma),$ $\Phi(w, 1) = 1 \cdot w = w$. Therefore, the map $h$ is continuous by the pasting lemma.
\end{enumerate}

We have defined the map $f : Y \to M$. We now proceed to show that a regular fiber of $f$ is a (possibly disconnected) sub-surface in $P$ representing a non-zero rational homology class. Since $\dim \Sigma < \dim M$ and $\pi_M \circ g : X \to M$ is a smooth map, the image $(\pi_M \circ g)(\Sigma) \subset M$ is a proper subset. Choose a point $p \in M \setminus (\pi_M \circ g)(\Sigma)$. Then, $\pi_M^{-1}(p) = \{p\} \times S^1 \subset M \times S^1$. Let us homotope $g$ slightly to make it transverse to $\{p\} \times S^1$. Thus, 
$$g^{-1}(\{p\} \times S^1) = \gamma_1 \sqcup \gamma_2 \sqcup \cdots \sqcup \gamma_\ell,$$
where $\{\gamma_i\} \subset X$ are a disjoint collection of simple closed curves. By our choice of $p \in M$, $\{p\} \times S^1$ intersects $g(\Sigma) \subset M \times S^1$ trivially. Therefore, each $\gamma_i$ is disjoint from the Donaldson divisor $\Sigma \subset X$. Thus, $\gamma_i \subset X \setminus \nu(\Sigma) = W$. From the construction of $h : Z \to X$, we have
$$h^{-1}(\gamma_i) = \gamma_i \times S^1 \subset W \times S^1$$ 
Further, from the construction of $k$, we have
$$k^{-1}(\gamma_i \times S^1) = \gamma_i \times S^1 \sqcup \tau(\gamma_i \times S^1) \subset Y$$
Therefore,
\begin{align*}
f^{-1}(p) &= \bigsqcup_i (\gamma_i \times S^1 \sqcup \tau(\gamma_i \times S^1)) \subset Y\\
    &= \bigsqcup_i (\gamma_i \times S^1 \times \{-1\}) \sqcup (\gamma_i \times S^1 \times \{1\}) \subset P = W \times S^1 \times [-1, 1]
\end{align*}
Let $\alpha = [g^{-1}(\{p\} \times S^1)] = [\gamma_1] + \cdots + [\gamma_\ell] \in H_1(X; \mathbf{Q})$. Then, $\alpha = g_!([\{p\} \times S^1])$ where $g_! : H_1(M \times S^1; \mathbf{Q}) \to H_1(X; \mathbf{Q})$ is the Umkehr map in homology\footnote{For oriented connected closed manifolds $M, N$ of the same dimension and a map $f : M \to N$, the Umkehr (``wrong way") map in homology is a morphism $f_! : H_k(N) \to H_k(M)$, such that for any $k$-dimensional closed smooth submanifold $S \subset N$ transverse to $f$, one has $f_![S] = [f^{-1}(S)]$. See, \cite[Definition 2.4]{dlH}.}. Note that $g_!$ is Poincar\'{e} dual to $g^* : H^1(M \times S^1; \mathbf{Q}) \to H^1(X; \mathbf{Q})$. Since $\mathrm{deg}(g) \neq 0$, $g^*$ must be injective (see, \cite[Proposition 2.6]{dlH}). Therefore, $\alpha \neq 0$.  Since $\gamma_i \subset W$, $\alpha$ represents a class in $H_1(W; \mathbf{Q})$ as well. This class must be non-zero, as being null-homologous in $W$ would force being null-homologous in $X$. Finally, note that
$$[\gamma_i \times S^1] = [\gamma_i] \otimes [\{p\} \times S^1] \in H_1(W \times S^1)$$
Therefore, 
$[f^{-1}(p)] = 2 \alpha \otimes [\{p\} \times S^1] \in H_1(W \times S^1 \times [-1, 1]; \mathbf{Q}) = H_1(P; \mathbf{Q})$. Since $\alpha \neq 0$, we obtain, $[f^{-1}(p)] \neq 0 \in H_1(P; \mathbf{Q})$. In other words, $[f^{-1}(p)] \in H_1(P; \mathbf{Z})$ is non-torsion. This concludes the proof.
\end{proof}

We are now ready to prove Theorem \ref{thm-main}.

\begin{proof}[Proof of Theorem \ref{thm-main}]
Let $M^{2n-1}$ be an arbitrary odd-dimensional manifold. By Proposition \ref{prop-contasc}, we obtain a Liouville manifold $(P^{2n+2}, \omega)$ with contact boundary $Y^{2n+1}$ and a map $f : Y \to M$ such that for a regular fiber $F = f^{-1}(p)$, the class $[F] \in H_2(P, \mathbf{Z})$ is not torsion. Therefore, the dual $\alpha \in H^2(P, \mathbf{Z})$ of $[F]$ under the universal coefficient theorem is non-zero. Let $E$ be a complex line bundle over $P$ with $c_1(E) = \alpha$. Let $L$ be the pre-quantum bundle over $P$. Since $P$ is Liouville, $c_1(P) = [\omega] = 0$. Therefore, $L$ is a trivial complex line bundle over $P$. Thus, $L^{\otimes k} \otimes E \cong E$. By Theorem \ref{thm-presas}, there exists a symplectic submanifold $W \subset P$ which is Poincar\'{e} dual to $\alpha$ and $W \cap Y \subset Y$ is a contact submanifold. Therefore, we have:
\begin{enumerate}
    \item $W$ is a convex symplectic manifold with boundary $W \cap C$,
    \item $W \cap Y \subset Y$ is Poincar\'{e} dual to $\alpha \in H^2(Y, \mathbf{Z})$.
\end{enumerate}
Consider the restriction $f|_{W \cap Y} : W \cap Y \to M$. By $(1)$, $W \cap Y$ is a Liouville-fillable (hence, tight) contact manifold. By $(2)$, the oriented intersection number of the regular fiber $F$ of $f : Y \to M$ and $W \cap Y$ is positive. Hence, $f|_{W \cap Y} : W \cap Y \to M$ has positive degree.  \end{proof}

\section{Weinstein-fillable domination}\label{sec-weinfill}

We recall the following notion introduced by Gromov in the context of the systolic inequality:

\begin{definition}\cite{Gromov} An oriented closed connected manifold $M^m$ with fundamental group $\pi = \pi_1(M)$ is \emph{essential} if $\phi_*([M]) \neq 0 \in H_n(K(\pi, 1); \mathbf Z)$ where $\phi : M \to K(\pi, 1)$ is the map classifying the universal cover of $M$. If moreover, $\phi_*([M]) \neq 0 \in H_n(K(\pi, 1); \mathbf Q)$, then we say $M$ is \emph{rationally essential}.\end{definition}

In the next proposition, we make the elementary observation that high dimensional Weinstein-fillable contact manifolds are inessential. We refer the reader to \cite{BCS} for more refined (and in fact, a complete) obstruction to Weinstein fillability in high dimensions.

\begin{proposition}\label{prop-steininess}For $n \geq 3$, a Weinstein-fillable contact manifold $(Y^{2n-1}, \xi)$ is not essential.\end{proposition}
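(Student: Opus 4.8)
The plan is to use that a Weinstein filling $W^{2n}$ of $Y = Y^{2n-1}$ is built, up to homotopy, out of cells of dimension at most $n$, so that in the range $n \geq 3$ the inclusion $\iota \colon Y = \partial W \hookrightarrow W$ induces an isomorphism on fundamental groups. Granting this, inessentiality becomes formal: the fundamental class of a boundary dies in the ambient manifold, and the classifying map of $Y$ factors up to homotopy through $W$.

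First I would record the structural property of a Weinstein domain $(W, \omega, v, \phi)$ that I need. Since $\phi$ is gradient-like for the Liouville field $v$, and the flow of $v$ rescales $\omega$ via $\mathcal{L}_v \omega = \omega$, the descending manifold of each critical point of $\phi$ is $\omega$-isotropic, hence of dimension $\leq n$; equivalently, every critical point of $\phi$ has Morse index $\leq n$. Reading the resulting handle decomposition upside down (i.e.\ using $-\phi$), I present $W$ as obtained from a collar $\partial W \times [0,1]$ by attaching handles of index $2n - k \geq n$.

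The crucial step, and the only place the hypothesis $n \geq 3$ enters, is that each of these upside-down handles has index $\geq n \geq 3$. Up to homotopy, attaching an index-$j$ handle is the attachment of a $j$-cell; for $j \geq 3$ this adds no generators and imposes no relations on the fundamental group. An inductive application of van Kampen's theorem across the handles then shows that $\iota_* \colon \pi_1(Y) \to \pi_1(W)$ is an isomorphism. Setting $\pi := \pi_1(Y) \cong \pi_1(W)$, the classifying maps $\phi_Y \colon Y \to K(\pi,1)$ and $\phi_W \colon W \to K(\pi,1)$ induce the same isomorphism on $\pi_1$, so $\phi_Y \simeq \phi_W \circ \iota$ by asphericity of $K(\pi,1)$.

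To finish, the long exact sequence of the pair $(W, \partial W)$ gives $[Y] = \partial[W, \partial W]$, whence $\iota_*[Y] = 0 \in H_{2n-1}(W; \mathbf{Z})$ by exactness. Combining,
$$\phi_{Y*}[Y] = \phi_{W*}\big(\iota_*[Y]\big) = \phi_{W*}(0) = 0 \in H_{2n-1}(K(\pi,1); \mathbf{Z}),$$
so $Y$ is not essential. I expect the $\pi_1$-isomorphism to be the real content of the argument: it forces genuine use of the half-dimensional index bound special to Weinstein (rather than merely Liouville) fillings, and $n \geq 3$ is exactly what keeps the upside-down handles of index $\geq 3$ so that $\pi_1$ is unaffected. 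Note that it is the isomorphism, not merely the surjectivity, of $\iota_*$ that is needed here — otherwise $\phi_{Y*}[Y]$ would only be seen to vanish after pushing forward into $H_{2n-1}(K(\pi_1(W),1))$, which would not suffice to detect it in $H_{2n-1}(K(\pi_1(Y),1))$.
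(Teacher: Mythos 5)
Your proof is correct and follows essentially the same route as the paper's: the index $\leq n$ bound for the Weinstein Morse function, the upside-down handle decomposition showing $W$ is built from $Y$ by handles of index $\geq n \geq 3$, the resulting $\pi_1$-isomorphism, and the factorization of the classifying map through $W$ to kill $\phi_{Y*}[Y]$. The only cosmetic differences are that you justify the index bound via isotropic descending manifolds (the paper cites \cite{Yasha}) and you phrase the final vanishing via the long exact sequence of the pair $(W,\partial W)$ rather than at the chain level, which are equivalent.
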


\begin{proof}Let $(W^{2n}, \omega, v, \phi)$ be a Weinstein filling for $Y$. Since the Morse function $\phi$ has critical points of index at most $n$ (see, for instance, \cite{Yasha}), we may use it to construct a handlebody decomposition of $W$ with handles of index at most $n$. Turning the handlebody upside-down, we deduce $W$ is obtained from $Y$ by attaching handles of index at least $n$. If $n \geq 3$, then this implies $Y \hookrightarrow W$ induces an isomorphism at the level of $\pi_1$. Let $\pi := \pi_1(Y) \cong \pi_1(W)$. Since the classifying map of the universal cover is functorial, we obtain that the map $\phi : Y \to K(\pi, 1)$ extends to a map $\Phi : W \to K(\pi, 1)$. Therefore, $\phi_\#[Y] = \partial \Phi_\#[W]$ is a boundary in the singular chain complex of $K(\pi, 1)$. Thus, $\phi_*[Y] = 0 \in H_n(K(\pi, 1); \mathbf Z)$, as desired.\end{proof}

\begin{corollary}\label{cor-steinnotdom}Let $M^{2n+1}$ be a rationally essential manifold. Then $M$ is not dominated by any Weinstein-fillable contact manifold.\end{corollary}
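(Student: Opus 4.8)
The plan is to derive Corollary \ref{cor-steinnotdom} from Proposition \ref{prop-steininess} by a standard degree/functoriality argument, using the fact that rational essentiality is monotone under domination. Suppose, for contradiction, that $M^{2n+1}$ is rationally essential and that there exists a Weinstein-fillable contact manifold $(Y^{2n+1}, \xi)$ together with a map $f : Y \to M$ of strictly positive degree. The key observation is that Proposition \ref{prop-steininess} (applied with the appropriate dimension shift, so that $Y^{2n+1}$ plays the role of $Y^{2n-1}$ there) tells us $Y$ is \emph{not} essential, and in fact the argument there shows something slightly stronger that I will want to use: the classifying map $\phi_Y : Y \to K(\pi_1(Y), 1)$ satisfies $(\phi_Y)_*[Y] = 0$. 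I would first record this, noting that $2n+1 \geq 7$ (so the hypothesis $n \geq 3$ of Proposition \ref{prop-steininess} is met in the relevant range) and dealing with small dimensions separately if needed.

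Next I would push the fundamental class forward. Let $\pi = \pi_1(Y)$ and $\Gamma = \pi_1(M)$. The map $f$ induces $f_\# : \pi \to \Gamma$, and by functoriality of classifying spaces there is a commuting square relating $\phi_Y : Y \to K(\pi,1)$, $\phi_M : M \to K(\Gamma,1)$, the induced map $Bf_\# : K(\pi,1) \to K(\Gamma,1)$, and $f$ itself, i.e.\ $\phi_M \circ f \simeq Bf_\# \circ \phi_Y$. Passing to rational homology in top degree and using that $f$ has degree $d \neq 0$, I get
\begin{equation*}
(\phi_M)_*(f_*[Y]) = (\phi_M)_*(d \cdot [M]) = d \cdot (\phi_M)_*[M],
\end{equation*}
while the other side of the square gives $(\phi_M)_*(f_*[Y]) = (Bf_\#)_*\big((\phi_Y)_*[Y]\big) = 0$, since $(\phi_Y)_*[Y] = 0$. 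Therefore $d \cdot (\phi_M)_*[M] = 0$ in $H_{2n+1}(K(\Gamma,1); \mathbf{Q})$, and as $d \neq 0$ this forces $(\phi_M)_*[M] = 0$, contradicting the rational essentiality of $M$.

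The main subtlety to handle carefully is the dimension bookkeeping: Proposition \ref{prop-steininess} is stated for $(Y^{2n-1}, \xi)$ under the assumption $n \geq 3$, whereas here the contact manifold has dimension $2n+1$. Reindexing, a $(2n+1)$-dimensional Weinstein-fillable contact manifold has a Weinstein filling of dimension $2n+2$, whose Morse function has critical points of index at most $n+1$; turning the handlebody upside-down shows $Y$ is built from handles of index at least $n+1 \geq 3$ precisely when $n \geq 2$, i.e.\ $\dim Y = 2n+1 \geq 5$. I would therefore state the argument so that the hypothesis of Proposition \ref{prop-steininess} is invoked with the shifted dimension, and check that rationally essential manifolds in the excluded low dimensions ($\dim Y = 3$) are either vacuous or handled by the remark on Sakuma's theorem in the introduction. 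I expect this indexing alignment, rather than the homological argument itself, to be the only real obstacle; the degree argument is routine once essentiality vanishing is in hand.
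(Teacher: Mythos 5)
Your core argument is the same as the paper's: the paper proves this corollary by citing the observation that any manifold dominating a rationally essential manifold is itself rationally essential (\cite[Example 4.5.(3)]{dlH}) and combining it with Proposition \ref{prop-steininess}; you simply unpack that cited observation in detail, and your commuting square $\phi_M \circ f \simeq Bf_\# \circ \phi_Y$ together with the pushforward of fundamental classes is exactly the standard proof of that monotonicity. In the range where Proposition \ref{prop-steininess} applies --- $\dim Y = 2n+1 \geq 5$, i.e.\ $n \geq 2$ after the reindexing you describe --- your proof is correct and matches the paper's.

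The genuine error is in your closing paragraph: the excluded case $\dim Y = 3$ is neither vacuous nor ``handled by the remark on Sakuma's theorem.'' That remark concerns domination \emph{by} tight contact $3$-manifolds --- a positive result in the opposite direction --- and says nothing about Weinstein fillability. In fact the statement genuinely fails in dimension $3$: the torus $T^3$ is rationally essential (it is aspherical), dominates itself by the identity, and carries a Weinstein-fillable contact structure, since $T^3$ is the unit cotangent bundle of $T^2$ and the disk cotangent bundle $DT^*T^2 \cong T^2 \times D^2$ is a Weinstein domain. This is consistent with the hypothesis $n \geq 3$ in Proposition \ref{prop-steininess}: for a $4$-dimensional Weinstein filling, the upside-down handles have index $\geq 2$, and attaching $2$-handles can kill elements of $\pi_1$ (here $\mathbf{Z}^3 \to \mathbf{Z}^2$), so the extension-of-the-classifying-map argument breaks down. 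The correct resolution is not to argue the low-dimensional cases separately but to read the corollary, like Proposition \ref{prop-steininess}, with the implicit hypothesis that the dimension is at least $5$ --- a restriction the paper itself leaves tacit.
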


\begin{proof}It is a straightforward observation that any manifold dominating a rationally essential manifold must in turn be rationally essential (see, for instance, \cite[Example 4.5.(3)]{dlH}). The conclusion is then immediate from Proposition \ref{prop-steininess}.\end{proof}

Next, we give a class of manifolds which admit a domination by Weinstein-fillable contact manifolds. We introduce the following definition:

\begin{definition}A manifold $X$ shall be called \emph{rationally $k$-connected} if $\pi_1(X)$ is finite (not necessarily abelian) and $\pi_i(X) \otimes \mathbf Q = 0$ for all $2 \leq i \leq k$.\end{definition} 

In \cite{Geiges}, Geiges showed that any $(n-1)$-connected $(2n+1)$-manifold is diffeomorphic, upto connect sum with exotic spheres, to a manifold that admits a Weinstein-fillable contact structure in every homotopy class of almost contact structures. We prove the following result with the weakened hypothesis of rational $(n-1)$-connectedness:

\begin{proposition}\label{prop-ratconnweindom} A closed connected oriented rationally $(n-1)$-connected manifold $X^{2n+1}$ is dominated by a Weinstein-fillable contact manifold.\end{proposition}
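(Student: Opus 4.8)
The plan is to reduce the statement to the theorem of Geiges \cite{Geiges} quoted above by showing that any rationally $(n-1)$-connected closed oriented manifold $X^{2n+1}$ is dominated by a \emph{genuinely} $(n-1)$-connected closed oriented manifold $X'$. Granting this, Geiges' theorem produces a homotopy sphere $\Sigma$ such that $X' \# \Sigma$ carries a Weinstein-fillable contact structure; since the collapse map $X' \# \Sigma \to X'$ has degree one and degree is multiplicative under composition, the composite $X' \# \Sigma \to X' \to X$ exhibits $X$ as dominated by a Weinstein-fillable contact manifold. So the whole content is the reduction to the integrally-connected case.

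The first step is to pass to the universal cover. Since $\pi_1(X)$ is finite, the universal cover $\tilde{X}$ is closed and the covering projection $p : \tilde{X} \to X$ has degree $|\pi_1(X)| > 0$. As $\pi_i(\tilde{X}) = \pi_i(X)$ for $i \geq 2$, the manifold $\tilde{X}$ is simply connected and satisfies $\pi_i(\tilde{X}) \otimes \mathbf{Q} = 0$ for $2 \leq i \leq n-1$. By the rational Hurewicz theorem this forces $H_i(\tilde{X}; \mathbf{Q}) = 0$ for $1 \leq i \leq n-1$, and Poincar\'{e} duality then gives $H_i(\tilde{X}; \mathbf{Q}) = 0$ for $n+2 \leq i \leq 2n$ as well. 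Thus the rational homology of $\tilde{X}$ is concentrated in degrees $0, n, n+1$ and $2n+1$. For $n \leq 2$ the constraint $2 \leq i \leq n-1$ is vacuous, so $\tilde{X}$ is already $(n-1)$-connected and I may take $X' = \tilde{X}$; I henceforth assume $n \geq 3$.

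The heart of the argument is to manufacture, from $\tilde{X}$, an $(n-1)$-connected manifold $X'$ of positive degree over it, by realizing the rational homotopy type of $\tilde{X}$ by a highly connected manifold. Let $r = \dim_{\mathbf{Q}} H_n(\tilde{X}; \mathbf{Q}) = \dim_{\mathbf{Q}} H_{n+1}(\tilde{X}; \mathbf{Q})$, the equality holding by Poincar\'{e} duality. Since $n+1 \leq 2n-2$ for $n \geq 3$, the rational Hurewicz theorem guarantees that, after clearing denominators, every class in $H_n(\tilde{X}; \mathbf{Q})$ and $H_{n+1}(\tilde{X}; \mathbf{Q})$ is spherical. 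I would choose maps from spheres representing rational bases of these two groups and assemble them into a map $\phi : V \to \tilde{X}$ out of a suitable $(n-1)$-connected model $V$ (for instance a connected sum of sphere bundles, or more generally a plumbing) whose middle rational homology is carried isomorphically onto that of $\tilde{X}$. The desired pair $(X', f)$ would be obtained from $\phi$ after correcting the Poincar\'{e} pairing; equivalently, one may invoke rational surgery theory to realize the $(n-1)$-connected rational Poincar\'{e} complex abstractly equivalent to $\tilde{X}$ by a smooth $(n-1)$-connected manifold together with a rational homotopy equivalence of positive degree.

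I expect the main obstacle to lie precisely in this construction, namely in ensuring that the map $f : X' \to \tilde{X}$ built from spherical representatives has non-zero degree. The degree is detected by the top-dimensional pairing, so it is not enough that $f$ be a rational isomorphism on $H_n$: one must arrange compatibility of the intersection/linking pairing between $H_n$ and $H_{n+1}$ on $X'$ with that on $\tilde{X}$, so that $f^*$ stays injective on the products generating $H^{2n+1}(\tilde{X}; \mathbf{Q})$. Since $\tilde{X}$ is a rational Poincar\'{e} duality space with homology concentrated in the four degrees above, this middle pairing is non-degenerate, and the realization problem reduces to producing an $(n-1)$-connected manifold carrying the same non-degenerate rational pairing. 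This is exactly where the high connectivity is used. Once $f$ is known to have positive degree, composing with the covering $p : \tilde{X} \to X$ and appealing to Geiges' theorem as in the first paragraph completes the proof.
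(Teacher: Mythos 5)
Your overall reduction --- pass to the finite universal cover, then dominate $\tilde X$ by a genuinely $(n-1)$-connected manifold assembled from spherical representatives of the middle homology --- is exactly the strategy of the paper's proof, but your proposal stops precisely where that proof has to begin, and it misdiagnoses where the difficulty lies. The natural model (and the one the paper uses) is $X' = \#^k(S^n\times S^{n+1})$; the data at hand only produces a map on the wedge $\vee^k(S^n\vee S^{n+1})$, namely the wedge of representatives $\varphi_i,\psi_i$ of Poincar\'e-dual bases $\{\alpha_i\}\subset H_n(\tilde X;\mathbf Q)$, $\{\beta_i\}\subset H_{n+1}(\tilde X;\mathbf Q)$. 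The obstruction to extending this map over the top cell of the connected sum is the image in $\pi_{2n}(\tilde X)$ of the attaching map of that cell, i.e.\ the sum of Whitehead products $\sum_i[\varphi_i,\psi_i]$; this, not the pairing, is the crux. (Once an extension $F$ exists, the dual-basis choice already forces $\deg F\neq 0$: $F^*\mathrm{PD}(\alpha_i)\cup F^*\mathrm{PD}(\beta_i)$ is a nonzero multiple of the top class, so the ``compatibility of pairings'' you worry about is the easy part.) The paper kills the Whitehead obstruction by rational homotopy theory: $\tilde X$ and $\#^k(S^n\times S^{n+1})$ have isomorphic rational cohomology rings, both are formal by Miller's theorem \cite{Miller} (simply connected, rationally $(n-1)$-connected, of dimension $2n+1\le 4n-2$ when $n\ge 2$), hence have isomorphic rational homotopy Lie algebras; therefore $\sum_i[\varphi_i,\psi_i]$ vanishes rationally, i.e.\ is torsion, and precomposing $\varphi_i,\psi_i$ with degree-$d$ self-maps of the spheres (with $d$ annihilating that torsion) makes the extension exist while keeping the degree nonzero. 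Your invocation of ``rational surgery theory'' or plumbings is a placeholder for exactly this argument; without it, or a worked-out substitute, there is no proof.

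There is also a hole in your endgame. Geiges' theorem \cite{Geiges} produces Weinstein-fillable contact structures only within homotopy classes of \emph{almost contact structures}, so your route additionally requires $X'$ (or, for $n\le 2$, $\tilde X$ itself) to admit an almost contact structure, which is not automatic. Concretely, for $n=2$ take $X$ to be the Wu manifold $SU(3)/SO(3)$: it is simply connected, hence a legitimate input to the proposition, and your recipe sets $X'=\tilde X=X$; but $W_3(X)\neq 0$, so $X$ admits no almost contact structure and Geiges' theorem is vacuous for it. (The paper's argument handles this case by dominating such a rational homology sphere by $S^{2n+1}$.) The paper in fact avoids Geiges altogether: $\#^k(S^n\times S^{n+1})=\partial\bigl(\#_\partial^k(S^n\times D^{n+2})\bigr)$ is the boundary of an explicit \emph{subcritical} Weinstein domain, obtained by attaching $k$ subcritical $n$-handles to $D^{2n+2}$ along isotropic spheres in disjoint Darboux charts of $(S^{2n+1},\xi_{\mathrm{std}})$, so Weinstein fillability is immediate and no exotic-sphere correction or almost-contact hypothesis is needed.
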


\begin{proof}By passing to the universal cover, we may assume without loss of generality that $X$ is simply connected. Let 
$$h_i : \pi_i(X) \otimes \mathbf Q \to H_i(X; \mathbf Q)$$
denote the rational Hurewicz homomorphism. Since $\pi_i(X) \otimes \mathbf Q = 0$ for $1 < i < n$, $h_i$ is an isomorphism for $1 \leq i < 2n-1$ by the rational Hurewicz theorem \cite{KK}. In particular, $h_n$ and $h_{n+1}$ are isomorphisms. 

By Poincar\'{e} duality, $H_n(X; \mathbf Q) \cong H_{n+1}(X; \mathbf Q)$. Therefore, we may choose bases $\{\alpha_i : 1 \leq i \leq k\}$ (resp. $\{\beta_i : 1 \leq i \leq k\}$) for $H_n(X; \mathbf Q)$ (resp. $H_{n+1}(X; \mathbf Q)$) such that $\langle \mathrm{PD}(\alpha_i), \beta_j \rangle = \delta_{ij}$. Since $h_n$ and $h_{n+1}$ are isomorphisms, we may choose spherical representatives $\varphi_i : S^n \to X$ (resp. $\psi_i : S^{n+1} \to X$) for $\alpha_i$ (resp. $\beta_i$). By taking the wedge sum of these, we define a map
$$f : \vee^k (S^n \vee S^{n+1}) \to X$$
We would like to extend $f$ to a map $F : \#^k (S^n \times S^{n+1}) \to X$. To this end, observe that $\#^k(S^n \times S^{n+1})$ is obtained from attaching a $(2n+1)$-cell to $\vee^k (S^n \vee S^{n+1})$ by an element of $\pi_{2n+1}(\#^k(S^n \times S^{n+1}))$ given by product of $k$ many Whitehead brackets between the generators of $\pi_n(S^n)$ and $\pi_n(S^{n+1})$. Therefore, it certainly \emph{suffices} to show that the element
$$[\varphi_1, \psi_1] \cdots [\varphi_n, \psi_n] \in \pi_{2n}(X)$$
is zero. In fact, we shall show the weaker statement that it is torsion. To accomplish this, we begin by observing that 
$$H^*(X; \mathbf{Q}) \cong H^*(\#^k (S^n \times S^{n+1}); \mathbf{Q}),$$
as graded $\mathbf{Q}$-algebras. Since $X$ and $\#^k(S^n \times S^{n+1})$ are both simply connected and rationally $(n-1)$-connected, by \cite{Miller} they are formal in the sense of Sullivan (see, \cite{Sullivan}). Therefore, their Sullivan minimal models must be the same. Since the rational homotopy graded Lie algebra is completely determined by the Sullivan minimal model, we conclude 
$$\pi_*(X) \otimes \mathbf Q \cong \pi_*(\#^k(S^n \times S^{n+1})) \otimes \mathbf Q$$
as graded Lie algebras over $\mathbf{Q}$. Therefore, we conclude
$$[\varphi_1, \psi_1] \cdots [\varphi_n, \psi_n] = 0\in \pi_{2n}(X) \otimes \mathbf{Q}$$
Suppose $[\varphi_1, \psi_1] \cdots [\varphi_n, \psi_n] \in \pi_{2n}(X)$ is annihilated by some $d > 0$. We define $\varphi_i'$ (resp. $\psi_i'$) by pre-composing $\varphi_i$ (resp. $\psi_i$) with the degree $d$ self-map of the sphere. Taking a wedge sum of these leads to a map $f' : \vee^k (S^n \vee S^{n+1}) \to X$ which extends to a map $F : \#^k (S^n \times S^{n+1}) \to X$. By construction, $F_*([S^n]) = d \cdot \alpha_i$ and $F_*([S^{n+1}]) = d \cdot\beta_i$. Therefore, $F_*([S^n])$ and $F_*([S^{n+1}])$ have non-zero intersection pairing in $X$. Therefore, by Poincar\'{e} duality, 
$$F^* : H^{2n+1}(X; \mathbf{Q}) \to H^{2n+1}(\#^k (S^n \times S^{n+1}; \mathbf{Q})$$ 
is non-zero. Thus, $X$ is dominated by $\#^k (S^n \times S^{n+1})$. 

Note that $\#^k (S^n \times S^{n+1})$ is the boundary of $\#_\partial^k (S^n \times D^{n+2})$, where $\#_\partial$ denotes the boundary connect sum operation. Since $\#_\partial^k (S^n \times D^{n+2})$ can be obtained from $D^{2n+2}$ by attaching $k$ subcritical $n$-handles along $k$ isotropic embeddings $S^n \hookrightarrow (S^{2n+1}, \xi_{\mathrm{std}})$ contained in disjoint Darboux charts, it is a (subcritical) Weinstein manifold. This concludes the proof. \end{proof}

\section{Application}\label{sec-app}

Let us recall the contact submanifolds with prescribed homology class produced by Theorem \ref{thm-imp} due to Ibort, Mart\'{i}nez-Torres and Presas \cite{IMP}. We introduce the following terminology: 

\begin{definition}
    Let $(Y^{2n+1}, \xi)$ be a contact manifold and $L \to Y$ be a complex line bundle. We shall call the codimension $2$ contact submanifold $Z \subset Y$ with $[Z] = \mathrm{PD}(c_1(L))$ produced by Theorem \ref{thm-imp} as an \emph{asymptotically contact-holomorphic divisor} corresponding to $L$.
\end{definition}

The following result is an observation which follows from the work of Giroux and Mohsen \cite{Giroux} on the existence of open book decompositions compatible with contact structures in high dimensions.

\begin{proposition}\label{prop-girouxobd}
    Let $(Y^{2n+1}, \xi)$ be a contact manifold and $L \cong Y \times \underline{\mathbf{C}}$ be a trivial complex line bundle. Then an asymptotically contact-holomorphic divisor corresponding to $L$ is Weinstein-fillable.
\end{proposition}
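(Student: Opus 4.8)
The plan is to recognize that, in the trivial bundle case, the divisor is nothing but the binding of an open book decomposition of $(Y, \xi)$ supporting the contact structure, and that the page of this open book furnishes the desired Weinstein filling.

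First I would specialize the construction summarized after Theorem \ref{thm-imp} to the case $L \cong \underline{\mathbf{C}}$. Then $E_k = L \otimes L_0^{\otimes k}$ is a smoothly trivial complex line bundle, so the asymptotically contact-holomorphic section $s_k$ is, in a global trivialization, a complex-valued function $s_k : Y \to \mathbf{C}$, and the divisor is its zero locus $Z = s_k^{-1}(\mathbf{0})$. This is precisely the data entering the Giroux--Mohsen construction of compatible open books \cite{Giroux}: the normalized phase
$$\pi := \frac{s_k}{|s_k|} : Y \setminus Z \to S^1$$
is a fibration whose behavior near $Z$ mimics the normal angular coordinate, so that for $k \gg 1$ the pair $(Z, \pi)$ is an open book decomposition of $Y$ with binding $Z$, supporting $\xi$. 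The equitransversality estimate (Condition $(2)$ in the summary) is exactly what guarantees that $\pi$ is free of critical points near $Z$ and that each page is transverse to the binding.

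Next I would invoke the central structural feature of this construction, namely that each page $P := \overline{\pi^{-1}(\theta_0)}$ is a \emph{Weinstein} domain, and not merely a Liouville one. The restriction $d\alpha|_P$ is an exact symplectic form with Liouville primitive $\alpha|_P$, and a compatible gradient-like Morse function is supplied by the asymptotically holomorphic data underlying $s_k$ --- in the guise of (a modification of) the plurisubharmonic function $-\log|s_k|^2$, whose Morse-theoretic behavior is analyzed in \cite[Section 5.1]{IMP} and \cite{Giroux}. A direct computation with the Liouville field $v$ defined by $i_v(d\alpha|_P) = \alpha|_P$ then shows that the contact-type boundary of $P$ is $\partial P = Z$ with induced contact structure $\ker(\alpha|_{TZ}) = \xi \cap TZ = \xi|_Z$. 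Hence $(Z, \xi|_Z)$ is Weinstein-fillable, with explicit filling $P$.

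The main obstacle is the passage from Liouville to Weinstein in the middle step. The existence of a compatible open book, or equivalently of a Liouville primitive on the page, is formal; upgrading this to a genuine gradient-like Morse structure of the correct index requires the quantitative asymptotically holomorphic control on $s_k$ and the plurisubharmonicity of $-\log|s_k|^2$. It is precisely here that one must appeal to the fine analysis of \cite{Giroux} rather than to abstract properties of open books.
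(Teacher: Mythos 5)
Your proposal is correct and follows essentially the same route as the paper: both identify the divisor $Z_k = s_k^{-1}(0)$ as the binding of the Giroux--Mohsen open book given by the phase map $s_k/|s_k| : Y \setminus Z_k \to S^1$, invoke \cite[Theorem 10]{Giroux} to conclude this is a \emph{Weinstein} open book, and take a page as the Weinstein filling of $(Z_k, \xi|_{Z_k})$. Your additional discussion of the plurisubharmonic function $-\log|s_k|^2$ and the equitransversality estimate correctly explains why the cited theorem delivers Weinstein (not merely Liouville) pages, which the paper leaves implicit in the citation.
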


\begin{proof}
    Recall the sections $s_k : M \to \mathbf{C}$ of $L$ satisfying Conditions $(1)$ and $(2)$ in the summary of the proof of Theorem \ref{thm-imp}, which give rise to the required contact divisors $Z_k = s_k^{-1}(0)$ for $k \gg 1$. By \cite[Theorem 10]{Giroux}, $s_k/|s_k| : M \setminus Z_k \to S^1$ provides a Weinstein open-book structure on $M$ with $Z_k$ as the binding. In particular, $Z_k$ admits a Weinstein-filling by the pages of this open book decomposition of $M$.
\end{proof}

The main result of this section is the following:

\begin{proposition}\label{prop-dondivnotfill}There exists contact manifolds $(Y^{2n+1}, \xi)$ and non-trivial complex line bundles $L$ over $Y$ such that the asymptoticaly contact-holomorphic divisors of $Y$ corresponding to $L$ are not Weinstein-fillable. \end{proposition}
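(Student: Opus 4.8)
The plan is to exhibit a contact manifold $(Y^{2n+1}, \xi)$ together with a non-trivial line bundle $L$ for which the asymptotically contact-holomorphic divisor $Z$ is \emph{rationally essential}. Once this is arranged, the conclusion is immediate: $Z$ is dominated by itself via the degree-$1$ identity map, so Corollary \ref{cor-steinnotdom} forces $Z$ to fail to be Weinstein-fillable. Thus the entire problem reduces to engineering the divisor to be rationally essential, and the natural way to control the topology of $Z$ is through the homotopy-theoretic part of Theorem \ref{thm-imp}: for a line bundle ($r = 1$) the inclusion $\iota : Z \hookrightarrow Y$ induces an isomorphism on $\pi_i$ for $1 \leq i \leq n-2$. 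This identifies $\pi_1(Z)$ with $\pi_1(Y)$ precisely when $n \geq 3$, which is exactly the range in which the obstruction of Proposition \ref{prop-steininess} is available, so I would carry out the construction for $n \geq 3$.

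For the construction I would take $Y = T^{2n+1}$, the $(2n+1)$-torus. Its tangent bundle is trivial, so it carries an almost contact structure, and hence by the existence $h$-principle of \cite{BEM} it admits a (possibly overtwisted) contact structure $\xi$ --- note that tightness of $Y$ plays no role. Let $e_1, \dots, e_{2n+1}$ be the standard generators of $H^1(T^{2n+1}; \mathbf{Z})$, and let $L$ be the complex line bundle with $c_1(L) = e_1 \cup e_2 \neq 0$, which is non-trivial. Let $Z = Z_k$ (for $k \gg 1$) be the asymptotically contact-holomorphic divisor corresponding to $L$ produced by Theorem \ref{thm-imp}, so that $[Z] = \mathrm{PD}(c_1(L))$.

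It remains to verify that $Z$ is rationally essential. Since $T^{2n+1} = K(\mathbf{Z}^{2n+1}, 1)$ is aspherical and, for $n \geq 3$, the map $\iota$ induces an isomorphism $\pi_1(Z) \xrightarrow{\sim} \mathbf{Z}^{2n+1}$, the inclusion $\iota$ itself may be taken as a classifying map $\phi_Z : Z \to K(\pi_1(Z), 1) = T^{2n+1}$ for the universal cover of $Z$. Consequently,
\[
(\phi_Z)_*[Z] = \iota_*[Z] = \mathrm{PD}(c_1(L)) = \mathrm{PD}(e_1 \cup e_2) \in H_{2n-1}(T^{2n+1}; \mathbf{Q}).
\]
As Poincar\'e duality is an isomorphism and $e_1 \cup e_2 \neq 0$, this class is non-zero, so $Z$ is rationally essential, completing the argument.

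The step I expect to require the most care is the homotopy-theoretic identification in the last paragraph: one must invoke the correct range of Theorem \ref{thm-imp} to know $\iota$ is a $\pi_1$-isomorphism, confirm that this legitimizes using $\iota$ as the classifying map into the aspherical target $T^{2n+1}$, and check that $\iota_*[Z]$ is literally $\mathrm{PD}(c_1(L))$ rather than a degenerate pushforward (the non-vanishing of $e_1 \cup e_2$ and of its Poincar\'e dual being routine). A secondary point worth flagging explicitly is the dimension restriction $n \geq 3$, which is forced both by the range of the $\pi_1$-isomorphism in Theorem \ref{thm-imp} and by the hypothesis of Proposition \ref{prop-steininess}.
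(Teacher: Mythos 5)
Your proposal is correct, but it takes a genuinely different route from the paper's. You fix an explicit ambient manifold $Y = T^{2n+1}$ (made contact via \cite{BEM}), take $L$ with $c_1(L) = e_1 \cup e_2$, and deduce rational essentialness of the divisor $Z$ from the Lefschetz-type, homotopy-theoretic part of Theorem \ref{thm-imp}: for $r = 1$ and $n \geq 3$ the inclusion $\iota : Z \hookrightarrow Y$ is a $\pi_1$-isomorphism, so since the torus is aspherical, $\iota$ is a model for the classifying map of the universal cover of $Z$, and $\iota_*[Z] = \mathrm{PD}(e_1 \cup e_2) \neq 0$ rationally; Proposition \ref{prop-steininess} (equivalently, Corollary \ref{cor-steinnotdom} applied to the identity map) then finishes the argument. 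The paper never uses the homotopy-group statement of Theorem \ref{thm-imp}: it starts from an \emph{arbitrary} rationally essential $X^{2n+1}$, dominates it by a stably parallelizable manifold $M$ (Proposition \ref{prop-stabparadom}, a Thom-type argument), puts an overtwisted contact structure on the parallelizable manifold $M \times T^{2k}$, and runs an induction, cutting down by divisors Poincar\'e dual to line bundles pulled back from successive $T^2$ factors while maintaining at each stage a degree-$1$ map to $M \times T^{2(k-j)}$; the final divisor $Z_k$ has a degree-$1$ map onto $M$, hence is rationally essential because essentialness is inherited under domination. What each approach buys: yours is shorter and more explicit --- no Thom argument, no induction, and the non-fillable divisor sits inside a torus --- at the cost of invoking the $\pi_1$-isomorphism part of Theorem \ref{thm-imp} and the restriction $n \geq 3$, which you correctly flag (the paper's examples occupy the same dimension range, divisors of dimension at least $5$, since Proposition \ref{prop-steininess} genuinely fails in dimension $3$, e.g.\ for $T^3 = \partial(D^* T^2)$). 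The paper's longer argument proves something stronger than the stated existence result --- every rationally essential $X^{2n+1}$ is dominated by a non-Weinstein-fillable asymptotically contact-holomorphic divisor --- and it needs only the homological conclusion $[Z] = \mathrm{PD}(c_1(L))$ of Theorem \ref{thm-imp}. Both are valid proofs of the proposition as stated.
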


Before proving Proposition \ref{prop-dondivnotfill}, we begin with a well-known observation regarding domination by stably parallelizable manifolds, which dates back to the work of Thom \cite{Thom} on the Steenrod realization problem for rational homology classes.

\begin{proposition}\label{prop-stabparadom}Every manifold is dominated by a stably parallelizable manifold.\end{proposition}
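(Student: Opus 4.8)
The plan is to realize a rational multiple of the fundamental class $[M]$ by a map from a stably framed (hence stably parallelizable) manifold, using \emph{framed} bordism in place of oriented bordism. The key input is that, rationally, the framed bordism of a space coincides with its rational homology; this is a manifestation of Serre's theorem that the stable homotopy groups of spheres are finite in positive degrees, and it is exactly the refinement of Thom's solution of the Steenrod realization problem that produces stably parallelizable, rather than merely oriented, representatives.

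Concretely, I would argue as follows. Let $M^n$ be closed, connected and oriented, so that $H_n(M; \mathbf{Q}) \cong \mathbf{Q}$ is generated by $[M]$. By the Pontryagin--Thom construction, the framed bordism group $\Omega_n^{\mathrm{fr}}(M)$ of stably framed $n$-manifolds equipped with a map to $M$ is naturally isomorphic to the stable homotopy group $\pi_n^s(M_+)$, where $M_+$ denotes $M$ with a disjoint basepoint. The Hurewicz homomorphism
$$h : \pi_n^s(M_+) \otimes \mathbf{Q} \longrightarrow H_n(M; \mathbf{Q}), \qquad [N, f] \longmapsto f_*[N],$$
is an isomorphism for $n > 0$: since $\pi_i^s(S^0) \otimes \mathbf{Q} = 0$ for $i > 0$ while $\pi_0^s(S^0) \otimes \mathbf{Q} = \mathbf{Q}$, the Atiyah--Hirzebruch spectral sequence computing $\pi_*^s(M_+) \otimes \mathbf{Q}$ collapses onto its bottom row $H_*(M; \mathbf{Q})$, and the edge homomorphism is precisely $h$.

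I would then pull the fundamental class back: choose $\xi \in \Omega_n^{\mathrm{fr}}(M) \otimes \mathbf{Q}$ with $h(\xi) = [M]$, and clear denominators to write $\xi = \tfrac{1}{k}[N, f]$ for some positive integer $k$ and some genuine framed bordism class $[N, f] \in \Omega_n^{\mathrm{fr}}(M)$. By construction $f_*[N] = k[M]$ in $H_n(M; \mathbf{Q})$, so $f : N \to M$ has degree $k \neq 0$. A stably framed manifold carries a canonical orientation and has trivial stable tangent bundle, so $N$ is oriented and \emph{stably parallelizable}. Finally, replacing $N$ by a connected component on which $f$ has nonzero degree (one exists because the degrees over the components of $N$ sum to $k \neq 0$), we obtain a closed connected oriented stably parallelizable manifold dominating $M$.

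The step I expect to require the most care is the rational identification of framed bordism with homology through $h$; everything after it is formal. The substance there is Serre's finiteness theorem, which ensures that the only rational contribution to $\pi_n^s(M_+)$ comes from the fundamental-class factor $H_n(M; \mathbf{Q}) \otimes \pi_0^s(S^0)$. This is also exactly where stable parallelizability enters: had we worked with oriented bordism $\Omega_n^{SO}(M)$, whose coefficient ring $\Omega_*^{SO} \otimes \mathbf{Q}$ is a polynomial algebra on projective-space generators, we would recover only an oriented dominating manifold, whereas framed bordism sees solely the sphere spectrum and hence furnishes a stably parallelizable representative.
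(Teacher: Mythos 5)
Your proof is correct and takes essentially the same approach as the paper: both arguments rest on the rational stable Hurewicz isomorphism (i.e.\ Serre's finiteness theorem) and then realize a positive multiple of $[M]$ by a stably framed manifold mapping to $M$. The only real difference is presentational --- you package the geometric step as the Pontryagin--Thom isomorphism $\Omega_n^{\mathrm{fr}}(M) \cong \pi_n^s(M_+)$, whereas the paper carries out exactly that transversality construction by hand inside a stratified model of the suspension $\Sigma^k M$; one trivial remark is that if your chosen component has negative degree you should reverse its orientation (which preserves stable parallelizability) to obtain the strictly positive degree required by the paper's definition of domination.
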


\begin{proof}Let $X^m$ be a manifold. Consider the {stable Hurewicz homomorphism} 
$$h : \pi_n^s(X) \to H_n(X),$$
where $\pi_n^s(X) = \varinjlim_k \pi_{n+k}(\Sigma^k X)$ is the $n$-th stable homotopy group of $X$. By \cite[Proposition 5.23]{Hatcher}, $h$ is a rational isomorphism for all $n$. Therefore, there is some integer $d > 0$ such that $d \cdot [X] \in H_m(X)$ is in the image of $h$. Let $\phi : S^{m+k} \to \Sigma^k X$ be such that $h([\phi]) = d[X]$. We would like to make $\phi$ ``transverse" to the equator $X \subset \Sigma^k X$. Then, $\phi : \phi^{-1}(X) \to X$ would be a degree $d$ map. Therefore, $\phi$ would be the required domination. 

Since $\Sigma^k X$ is not a manifold, we must carry out this step carefully. Consider the following homotopy-model of $\Sigma^k X$. Fix points $p \in S^k$, $q \in X$. Let us define the space $Z$ obtained from $S^k \times X \times (-2, 2)$ by collapsing $S^k \times \{q\} \times \{1\}$ to a point $*_1$ and $\{p\} \times X \times \{-1\}$ to a point $*_2$. Then $Z$ is homotopy equivalent to $\Sigma^k X$, and $Z$ is a stratified space with two singular point-strata $\{*_1, *_2\}$. Let $\pi : S^k \times X \times (-2, 2) \to Z$ denote the quotient map. Notice $\pi(\{p\} \times X \times \{1/2\})$ is an embedded copy of $X$ in $Z$ disjoint from the singular point-strata. By a slight abuse of notation, we shall denote this subspace as simply $X \subset Z$.

Composing $\phi$ with this homotopy equivalence $\Sigma^k X \simeq Z$, one arrives at a map $\phi : S^{m+k} \to Z$. Let $U := S^{n+k} \setminus \phi^{-1}(\{*_1, *_2\})$. Then $\phi|_U : U \to Z \setminus \{*_1, *_2\}$ is continuous proper map between smooth (noncompact) manifolds, and thus can be approximated by a smooth proper map. Hence, we assume without loss of generality that $\phi|_U : U \to Z \setminus \{*_1, *_2\}$ is indeed smooth. Since $X \subset Z \setminus \{*_1, *_2\}$ is compact, by a small perturbation if necessary, we may ensure $\phi$ is transverse to $X \subset Z \setminus \{*_1, *_2\}$. Let $Y := \phi^{-1}(X)$. Then $\dim Y = m$, and $\phi|_Y : Y \to X$ defines a degree $d$ map. Moreover, since $X \subset Z \setminus \{*_1, *_2\}$ has trivial normal bundle, so does $Y \subset S^{m+k}$. Therefore, $Y$ is stably parallelizable, as desired.
\end{proof}

We are now ready to prove Proposition \ref{prop-dondivnotfill}.

\begin{proof}[Proof of Proposition \ref{prop-dondivnotfill}]
Let $X^{2n+1}$ be a rationally essential manifold. By Proposition \ref{prop-stabparadom}, there exists a stably parallelizable manifold $M^{2n+1}$ dominating $X$. Therefore, $M$ is also rationally essential. Since $M$ is stably parallelizable, there exists $k \gg 1$ such that $M \times T^{2k}$ is parallelizable. Choose a co-rank $1$ trivial subbundle $\xi_0 \subset T(M \times T^{2k})$ and equip $\xi_0 \cong \underline{\mathbf C}^{n+k}$ with the obvious fiberwise complex structure. By the existence $h$-principle of Borman, Eliashberg and Murphy \cite{BEM}, $M \times T^{2k}$ admits a (overtwisted) contact structure $\xi$ in the homotopy class of $\xi_0$.

Let $\pi_0 : M \times T^{2k} \to M$ be the projection to the $M$ factor, and $\pi_j : M \times T^{2k} \to T^2$ denote the projection to the $j$-th torus factor, where $1 \leq j \leq k$. We shall prove by induction that there is a descending chain of asymptotically contact-holomorphic divisors
$$(M \times T^{2k}, \xi) \supset (Z_1, \xi|_{Z_1}) \supset (Z_2, \xi|_{Z_2}) \supset \cdots \supset (Z_k, \xi|_{Z_k}),$$
such that $f_j : (\pi_0, \pi_1, \cdots, \pi_{k-j}) : Z_j \to M \times T^{2(k-j)}$ is map of degree $1$. 

To this end, let $L$ be a line bundle over $T^2$ with Euler class $1$. In the base case of the induction, we consider the line bundle $L_0 := (\pi_k)^* L$. Using Theorem \ref{thm-imp}, we find an asymptotically contact-holomorphic divisor $Z_1 \subset (M \times T^{2k}, \xi)$ with $[Z_1] = \mathrm{PD}(c_1(L_0))$. Since $\langle c_1(L_0), [T_k] \rangle = 1$, the oriented intersection number between $Z_1$ and $T_k$ must be $1$. Therefore, 
$$f_1 := (\pi_0, \pi_1, \cdots, \pi_{k-1}) : Z_1 \to M \times T^{2(k-1)}$$
is a degree $1$ map. For the inductive step, suppose we have already constructed the desired contact submanifold $Z_j$, so that $f_j : Z_j \to M \times T^{2(k-j)}$ is a degree $1$ map. For clarity of exposition, let us denote $N := M \times T^{2(k-j-1)}$, so that 
$$M \times T^{2(k-j)} = N \times T^2$$ 
Consider the line bundle on $N \times T^2$ obtained by pulling back $L$ along the projection map $N \times T^2 \to T^2$. By a temporary abuse of notation, let us also denote the resulting line bundle on $N \times T^2$ as $L$. Let $L_j := f_j^* L$. Using Theorem \ref{thm-imp}, we find an asymptotically contact-holomorphic divisor $Z_{j+1} \subset (Z_j, \xi|_{Z_j})$ with $[Z_j] = \mathrm{PD}(c_1(L_j))$. Choose $x \in N$ such that $f_j$ is transverse to $\{x\} \times T^2 \subset N \times T^2$. Let us denote $T := \{x\} \times T^2$ for ease of notation. Then, 
\begin{align*}\langle c_1(L_j), [f_j^{-1}(T)] \rangle
= \langle f_j^* c_1(L), [f_j^{-1}(T)] \rangle 
&= \langle c_1(L), (f_j)_* [f_j^{-1}(T)] \rangle  \\
&= \langle c_1(L), (f_j)_* (f_j)_! [T] \rangle \end{align*}
Here, $(f_j)_! : H_2(N \times T^2) \to H_2(Z_j)$ denotes the Umkehr map in homology. Since $f_j$ is degree $1$, $(f_j)_* \circ (f_j)_!$ is multiplication by $1$ (see, \cite[Proposition 2.6]{dlH}). Thus, $(f_j)_* (f_j)_! [T] = [T]$. Since $\langle c_1(L), [T] \rangle = 1$, we conclude that $\langle c_1(L_j), [f_j^{-1}(T)] \rangle = 1$. Therefore, the oriented intersection number between $Z_j$ and $f_j^{-1}(\{x\} \times T^2)$ must be $1$. Consequently, 
$$f_{j+1} : Z_{j-1} \hookrightarrow Z_j \stackrel{f_j}{\to} M \times T^{2(k-j)} \to M \times T^{2(k-j-1)}$$
is a degree $1$ map. This proves the inductive step. 

In particular, this construction produces a contact manifold $(Z_{k-1}, \xi|_{Z_{k-1}})$ and an asymptotically contact-holomorphic divisor $Z_k \subset (Z_{k-1}, \xi|_{Z_{k-1}})$ such that there is a degree $1$ map $f_k : Z_k \to M$. Since $M$ is rationally essential, $Z_k$ is also rationally essential. Therefore, $Z_k$ is not Weinstein-fillable by Corollary \ref{cor-steinnotdom}. This concludes the proof.\end{proof}

We close this section with the following question. 

\begin{question}
    Let $(Y^{2n+1}, \xi)$ be a contact manifold and $L \to Y$ be a non-trivial complex line bundle. Are asymptotically contact-holomorphic divisors corresponding to $L$ always tight? Are they tight if moreover $(Y^{2n+1}, \xi)$ is tight?
\end{question}

\bibliography{cdbibs}
\bibliographystyle{alpha}
\end{document}